\newtheorem{theorem}{Theorem}[section]
\newtheorem{lemma}[theorem]{Lemma}
\newtheorem{proposition}[theorem]{Proposition}
\newtheorem{corollary}[theorem]{Corollary}
\theoremstyle{definition}
\newtheorem{remark}{Remark}
\title{A fiber bundle over BMO Teich\-m\"ul\-ler space}
\author[K. Matsuzaki]{Katsuhiko Matsuzaki}
\address{Department of Mathematics, School of Education, Waseda University \endgraf
Shinjuku, Tokyo 169-8050, Japan}
\email{matsuzak@waseda.jp}
\subjclass[2020]{Primary 30C62, 30F60, 30H35; Secondary 32G15, 46G20, 57R22}
\keywords{Teich\-m\"ul\-ler space, BMOA, VMOA, holomorphic split submersion, pre-Schwarzian derivative, 
real-analytic disk bundle, Bers fiber space}
\thanks{Research supported by 
Japan Society for the Promotion of Science (KAKENHI 23K25775 and 23K17656)}
\begin{document}

\maketitle

\begin{abstract}
We prove that the fiber space consisting of BMOA functions that are
the logarithms of derivatives of conformal homeomorphisms of the unit disk 
onto bounded quasidisks forms 
a real-analytic disk bundle over the Bers embedding of the BMO Teich\-m\"ul\-ler space. 
For the VMO Teich\-m\"ul\-ler space, we show that the corresponding sub-bundle consisting of VMOA functions
is real-analytically trivial.
\end{abstract}

\section{Introduction}

In contrast to the universal Teich\-m\"ul\-ler space, which can be regarded as the space of images of the unit circle under quasiconformal self-homeomorphisms of the complex plane, Astala and Zinsmeister \cite{AZ} introduced the BMO Teich\-m\"ul\-ler space by considering bi-Lipschitz self-homeomorphisms. This involves BMO functions, which play a central role in real and complex analysis (see \cite{Ga, Zhu}). Their framework was further developed by Shen and Wei \cite{SW} into a complex-analytic theory of Teich\-m\"ul\-ler spaces. On the other hand, using VMO functions characterized as a vanishing subclass of BMO functions under a degeneracy condition on the BMO norm, one can define the corresponding VMO Teich\-m\"ul\-ler space as a closed subspace of the BMO Teich\-m\"ul\-ler space.

A Teich\-m\"ul\-ler space can be identified with a deformation space of complex projective structures, and by taking the Schwarzian derivative of conformal maps, it is realized as a domain in a Banach space of holomorphic functions. This is known as the Bers embedding, which equips the Teich\-m\"ul\-ler space with a complex structure. In cases such as the universal Teich\-m\"ul\-ler space, where the action of a Fuchsian group is not considered, one may instead work with the pre-Schwarzian derivative
yielding a similar realization with certain distinctions.

In the case of the universal Teich\-m\"ul\-ler space, Zhuravlev \cite{Z} proved that within the Banach space of Bloch functions on the unit disk, the set of logarithms of derivatives of conformal homeomorphisms of the unit disk onto quasidisks (bounded or unbounded)
% that extend quasiconformally to the Riemann sphere 
forms an open set with uncountably many connected components. The component containing the origin (bounded case) corresponds to the fiber space over the Teich\-m\"ul\-ler space, which is biholomorphically equivalent to the Bers fiber space. In contrast, all other components (unbounded case) are biholomorphic to the Teich\-m\"ul\-ler space. A similar phenomenon occurs in the Banach space of BMOA functions on the unit disk in the setting of the BMO Teich\-m\"ul\-ler space, as noted in \cite{AZ} and further analyzed in detail in \cite{SW}.

In this paper, we study the structure of the projection from the space of BMOA functions that are the logarithms of derivatives of conformal maps from the unit disk onto bounded quasidisks, which arises as the fiber space over the BMO Teich\-m\"ul\-ler space. We show that this space forms a real-analytic disk bundle (Theorem~\ref{bundle2}). A corresponding result holds for the VMO Teich\-m\"ul\-ler space, where the bundle structure restricts to the sub-bundle consisting of VMOA functions; in this case, we further show that the real-analytic disk sub-bundle is trivial (Theorem~\ref{product}). 

As a side remark, for the VMO Teich\-m\"ul\-ler space, the phenomenon discovered by Zhuravlev does not occur: we prove that in the Banach space of VMOA functions, the set of logarithms of derivatives of conformal maps that extend quasiconformally consists of a single connected open set containing the origin (Theorem~\ref{onlyL}).

\medskip
\noindent
{\bf Acknowledgements.}
The author thanks the referee for his/her careful reading of the manuscript and giving valuable comments, which have helped to improve the clarity of this work; Proposition \ref{affine} is corrected and Remark \ref{explanation} is added.

\section{BMO and VMO Teich\-m\"ul\-ler spaces}

In this preliminary section, we introduce the BMO and VMO Teich\-m\"ul\-ler spaces in comparison with  
the universal Teich\-m\"ul\-ler space.  
We assume familiarity with the basic concepts of the theory of the universal Teich\-m\"ul\-ler space and do not explain them here.  
The reader may consult \cite{Le} for further details.

Let $\mathbb D$ be the unit disk in the complex plane $\mathbb C$, and let $\mathbb D^*=\widehat{\mathbb C} \setminus \overline{\mathbb D}$,  
where $\widehat{\mathbb C}=\mathbb C \cup \{\infty\}$ denotes the Riemann sphere.  
A Borel measure $m$ on $\mathbb D$ (or on $\mathbb D^*$) is called a {\it Carleson measure} if
$$
\Vert m \Vert_c=\sup_{x \in \mathbb S,\, r>0} \frac{m(\Delta(x,r) \cap \mathbb D)}{r}<\infty,
$$
where $\Delta(x,r)$ denotes the disk of radius $r>0$ centered at $x$ on the unit circle 
$\mathbb S=\partial \mathbb D$. A Carleson measure $m$ is said to be {\it vanishing} if
$$
\lim_{r \to 0} \frac{m(\Delta(x,r) \cap \mathbb D)}{r}=0
$$
uniformly in $x$. The set of Carleson measures on $\mathbb D$ is denoted by ${\rm CM}(\mathbb D)$,  
and the set of vanishing Carleson measures by ${\rm CM}_0(\mathbb D)$.  
Carleson measures on $\mathbb D^*$ are defined analogously.

Let $M(\mathbb D^*)=\{\mu \in L^\infty(\mathbb D^*) \mid \Vert \mu \Vert_\infty<1\}$ be the space of Beltrami coefficients on $\mathbb D^*$.  
We define the following subspaces of $M(\mathbb D^*)$, 
whose elements induce absolutely continuous Carleson measures:
\begin{align*}
M_B(\mathbb D^*)&=\{\mu \in M(\mathbb D^*) \mid m_\mu \in {\rm CM}(\mathbb D^*)\}, \ dm_\mu:=\frac{|\mu(z)|^2}{|z|^2-1}dxdy;\\
M_V(\mathbb D^*)&=\{\mu \in M(\mathbb D^*) \mid m_\mu \in {\rm CM}_0(\mathbb D^*)\}.
\end{align*}
The norm on $M_B(\mathbb D^*)$ is defined by $\Vert \mu \Vert_*=\Vert \mu \Vert_\infty +\Vert m_\mu \Vert_c^{1/2}$.  
Then, $M_V(\mathbb D^*)$ is a closed subspace of $M_B(\mathbb D^*)$.

The universal Teich\-m\"ul\-ler space $T$ is the quotient of $M(\mathbb D^*)$ by Teich\-m\"ul\-ler equivalence.  
Here, $\mu$ and $\nu$ in $M(\mathbb D^*)$ are said to be equivalent if the normalized quasiconformal self-homeomorphisms  
$H(\mu)$ and $H(\nu)$ of $\mathbb D^*$, having complex dilatations $\mu$ and $\nu$, respectively,  
extend to the same quasisymmetric homeomorphism of $\mathbb S$.  
The quotient map $\pi:M(\mathbb D^*) \to T$ is called the {\it Teich\-m\"ul\-ler projection}. 
The quotient topology is equipped from $M(\mathbb D^*)$ to $T$.
In this setting, the BMO Teich\-m\"ul\-ler space $T_B$ is defined as $\pi(M_B(\mathbb D^*))$,  
and the VMO Teich\-m\"ul\-ler space $T_V$ as $\pi(M_V(\mathbb D^*))$, which is closed in $T_B$
under the quotient topology induced from $M_B(\mathbb D^*)$. 

For $\mu \in M(\mathbb D^*)$, let $F_\mu$ denote the conformal homeomorphism of $\mathbb D$ onto a bounded domain (quasidisk) in $\mathbb C$  
with $F_\mu(0)=0$ and $(F_\mu)'(0)=1$, which extends to a quasiconformal self-homeomorphism of $\mathbb C$ with complex dilatation $\mu$.  
We assume that $F_\mu$ is extended so that $F_\mu(\infty)=\infty$. 
This map is uniquely determined by $\mu$.   
Then, the Schwarzian derivative $\Psi=S_{F_\mu}$ of $F_\mu$ belongs to the Banach space $A(\mathbb D)$ of  
holomorphic functions $\Psi$ on $\mathbb D$ for which the norm $\Vert \Psi \Vert_A=\sup_{z \in \mathbb D}(1-|z|^2)^2|\Psi(z)|$ is finite.  
We call this correspondence $S:M(\mathbb D^*) \to A(\mathbb D)$, given by $\mu \mapsto S_{F_\mu}$,  
the {\it Schwarzian derivative map}.  
It is known that $S$ is a holomorphic split submersion onto its image (see \cite[Section 3.4]{N}).

\begin{remark}\label{explanation}
We clarify the definition of a split submersion, since this concept will play an important role in our later arguments.
In general, a differentiable surjection $f:X \to Y$ between Banach manifolds $X$ and $Y$ is called a
split submersion if, at every point $x \in X$, the derivative $d_xf:T_xX \to T_{f(x)}Y$ between the tangent spaces
is surjective and admits a bounded linear right inverse $\tau:T_{f(x)}Y \to T_xX$. By the implicit function theorem, this is equivalent to requiring that for every $x \in X$
there exists a local differentiable right inverse $\sigma$ of $f$,
defined on some neighborhood of $f(x) \in Y$, such that $\sigma(f(x))=x$
(see \cite[p.89]{N}). We emphasize that in this equivalent statement 
the condition $\sigma(f(x))=x$ is necessary. 
If this condition is omitted, we shall refer to $f$ as a split submersion in the weak sense.
\end{remark}

%\begin{remark}\label{explanation}
%We clarify the definition of split submersion because this concept is crucial in our arguments later.
%In general, a differentiable surjection $f:X \to Y$ between Banach manifolds $X$ and $Y$ is called a
%split submersion if at every point $x \in X$, the derivative $d_xf:T_xX \to T_{f(x)}Y$ between the tangent spaces
%is surjective and there exists a bounded linear right inverse $\tau:T_{f(x)}Y \to T_xX $ of $d_xf$. By the implicit function theorem, this is equivalent to saying that for every $x \in X$
%there exists a local differentiable right inverse $\sigma$ of $f$
%defined on some neighborhood of $f(x) \in Y$ with $\sigma(f(x))=x$.
%See \cite[p.89]{N}. We note that in this equivalent statement 
%$\sigma$ is required to
%satisfy $\sigma(f(x))=x$. 
%If this condition is dropped, we say that $f$ is a split submersion in the weak sense.
%\end{remark}

The corresponding Banach spaces of holomorphic functions on $\mathbb D$ associated with 
$M_B(\mathbb D^*)$ and $M_V(\mathbb D^*)$  
are defined as follows:
\begin{align*}
A_B(\mathbb D)&=\{\Psi \in A(\mathbb D) \mid m_\Psi \in {\rm CM}(\mathbb D)\}, \ dm_\Psi:=(1-|z|^2)^3|\Psi(z)|^2dxdy;\\
A_V(\mathbb D)&=\{\Psi \in A(\mathbb D) \mid m_\Psi \in {\rm CM}_0(\mathbb D)\}.
\end{align*}
The norm on $A_B(\mathbb D)$ is defined by $\Vert \Psi \Vert_{A_B} =\Vert m_\Psi \Vert_c^{1/2}$.  
Then, $A_V(\mathbb D)$ is a closed subspace of $A_B(\mathbb D)$.

The images of $M_B(\mathbb D^*)$ and $M_V(\mathbb D^*)$ under the Schwarzian derivative map $S$ are  
contained in $A_B(\mathbb D)$ and $A_V(\mathbb D)$, respectively, and moreover, they satisfy
$$
S(M_B(\mathbb D^*))=S(M(\mathbb D^*)) \cap A_B(\mathbb D),\quad 
S(M_V(\mathbb D^*))=S(M(\mathbb D^*)) \cap A_V(\mathbb D).
$$
In addition with these, the following result is also proved by Shen and Wei \cite[Theorem 5.1]{SW}.

\begin{proposition}\label{weaksubmersion}
The map $S:M_B(\mathbb D^*) \to A_B(\mathbb D)$ is holomorphic and admits a local holomorphic right inverse at every point  
in the image $S(M_B(\mathbb D^*))$. The restriction of $S$ to $M_V(\mathbb D^*)$ enjoys the analogous property  
as a holomorphic map into $A_V(\mathbb D)$.
\end{proposition}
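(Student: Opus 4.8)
The plan is to prove Proposition~\ref{weaksubmersion} by transferring the known split submersion property of the Schwarzian derivative map $S$ on the full space $M(\mathbb D^*)$ to the BMO setting, where the key point is to verify that the standard constructions respect the Carleson measure structure. Recall that on $M(\mathbb D^*)$ the map $S$ is a holomorphic split submersion onto its image, so at each $\mu$ there is a local holomorphic section $\sigma$ of $S$ near $\Psi=S(\mu)$ valued in $M(\mathbb D^*)$. The strategy is to show that this section, when restricted to a neighborhood in $A_B(\mathbb D)$, in fact takes values in $M_B(\mathbb D^*)$ and is continuous (indeed holomorphic) for the $\Vert\cdot\Vert_*$ and $\Vert\cdot\Vert_{A_B}$ norms. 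Because we only claim a local right inverse and not a full split submersion, we do \emph{not} need to control the basepoint condition $\sigma(S(\mu))=\mu$ globally; we only need existence of \emph{some} holomorphic right inverse near each image point, which matches the ``weak sense'' notion isolated in Remark~\ref{explanation}.

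First I would recall the explicit form of the local right inverse for $S$ on $M(\mathbb D^*)$. The classical construction proceeds through the Bers embedding: given $\Psi \in A(\mathbb D)$ close to $S(M(\mathbb D^*))$, one solves the Schwarzian equation to recover a locally univalent map on $\mathbb D$, reflects across $\mathbb S$ to produce a quasiconformal map of $\widehat{\mathbb C}$, and reads off its Beltrami coefficient on $\mathbb D^*$. Equivalently, one uses the quasiconformal reflection and the formula expressing $\mu$ on $\mathbb D^*$ in terms of $\Psi$ via the second-order ODE. The decisive analytic input is then the dictionary between the two Carleson conditions: by the Shen--Wei theory, $\mu \in M_B(\mathbb D^*)$ if and only if $S(\mu)\in A_B(\mathbb D)$, and the measures $m_\mu$ and $m_\Psi$ are comparable in norm under the reflection construction. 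I would therefore carry out the estimate showing that if $\Psi \in A_B(\mathbb D)$ then the constructed Beltrami coefficient satisfies $m_{\sigma(\Psi)} \in {\rm CM}(\mathbb D^*)$, with the Carleson norm controlled locally by $\Vert \Psi\Vert_{A_B}$, and that the dependence $\Psi \mapsto \sigma(\Psi)$ is holomorphic in the $A_B$--$\Vert\cdot\Vert_*$ topology.

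Next I would treat the VMO case as a refinement rather than a separate argument. The reflection formula that sends $\Psi$ to $\sigma(\Psi)$ is the same; what changes is that the vanishing Carleson condition is preserved. Concretely, the comparability of $m_\mu$ and $m_\Psi$ at the level of the ``localized'' Carleson quantities $m(\Delta(x,r)\cap\mathbb D)/r$ should show that the vanishing as $r\to 0$ of one implies the vanishing of the other, uniformly in $x$. Since $A_V(\mathbb D)$ and $M_V(\mathbb D^*)$ are the closed subspaces cut out by these vanishing conditions, the local holomorphic section automatically restricts to a map between the VMO spaces, giving the analogous local right inverse property for $S|_{M_V(\mathbb D^*)}$ into $A_V(\mathbb D)$.

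The main obstacle I anticipate is the quantitative Carleson estimate in the construction of $\sigma$: one must show not merely that $\Psi \in A_B(\mathbb D)$ yields \emph{some} Beltrami coefficient with a Carleson measure, but that the particular coefficient produced by the holomorphic reflection section has its Carleson norm bounded \emph{locally uniformly} in $\Psi$, so that $\sigma$ is continuous and holomorphic into $(M_B(\mathbb D^*),\Vert\cdot\Vert_*)$ rather than merely into $M(\mathbb D^*)$. This is precisely the content that Shen and Wei establish, and I would invoke their estimates (the comparability of the two Carleson measures under the Bers-type reconstruction, together with holomorphic dependence on parameters) to close the argument. A secondary technical point is confirming holomorphy in the Banach-space sense for the section; this follows from the holomorphy of $S$ on $M(\mathbb D^*)$ combined with the boundedness of the right inverse, via the analytic implicit function theorem applied within the Carleson-controlled subspaces.
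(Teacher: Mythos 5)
The paper does not prove this proposition at all: it is quoted verbatim from Shen--Wei \cite[Theorem 5.1]{SW}, so there is no internal argument to compare yours against. Your sketch has the right overall shape for how that cited proof goes --- an Ahlfors--Weill-type reflection section $\sigma$ solving the Schwarzian ODE, followed by the verification that the resulting Beltrami coefficient has Carleson norm controlled locally uniformly by $\Vert\Psi\Vert_{A_B}$, with the VMO case obtained by tracking the vanishing of the localized Carleson quantities through the same estimates. But as written it is not an independent proof: at the decisive step you ``invoke their estimates \dots to close the argument,'' which is exactly what the paper does by citing \cite{SW}, so the proposal adds an outline of the construction without supplying the quantitative content. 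One assertion is also imprecise: you state that ``$\mu \in M_B(\mathbb D^*)$ if and only if $S(\mu)\in A_B(\mathbb D)$,'' but what holds (and what the paper records) is the image-level identity $S(M_B(\mathbb D^*))=S(M(\mathbb D^*)) \cap A_B(\mathbb D)$ together with the forward pointwise implication; the pointwise converse for an arbitrary $\mu$ representing a given $\Psi$ is not available and is not what the argument needs. What you actually need --- and should state --- is only that the particular coefficient $\sigma(\Psi)$ produced by the reflection construction lies in $M_B(\mathbb D^*)$ with locally uniform bounds, which is precisely the Carleson estimate you correctly flag as the main obstacle.
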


The Schwarzian derivative map $S:M(\mathbb D^*) \to A(\mathbb D)$ factors through the Teich\-m\"ul\-ler projection  
$\pi:M(\mathbb D^*) \to T$ into an injection $\alpha:T \to A(\mathbb D)$, which is called the {\it Bers embedding}. 
The map $\alpha$ is continuous since $S$ is continuous. Its image  
$\alpha(T)=S(M(\mathbb D^*))$ is open in $A(\mathbb D)$ and $\alpha^{-1}$ is continuous on $\alpha(T)$ since
there is a local continuous right inverse of $S$
due to the split submersion property of $S$. Hence, $\alpha$ is a homeomorphism onto   
$\alpha(T)$, endowing $T$ with a complex Banach manifold structure.  

Analogously, the Bers embedding for $T_B$ and $T_V$ is given by restricting $\alpha$ to these spaces.  
Proposition \ref{weaksubmersion} ensures that $\alpha:T_B \to A_B(\mathbb D)$  
and $\alpha:T_V \to A_V(\mathbb D)$ are homeomorphisms onto their respective images  
$\alpha(T_B)=S(M_B(\mathbb D^*))$ and $\alpha(T_V)=S(M_V(\mathbb D^*))$.  
Thus, $T_B$ and $T_V$ inherit Banach manifold structures, with $T_V$ being a closed submanifold of $T_B$.  
We remark that in \cite{SW}, the term ``split submersion" is used in the weaker sense, referring only to  
the property stated in Proposition \ref{weaksubmersion}. However, this does not affect the fact that $\alpha$
is a homeomorphism.

\section{The components in BMOA and VMOA}

In this section, we consider the map induced by the pre-Schwarzian derivative, 
instead of the Schwarzian derivative discussed in the previous section.
For $\mu \in M(\mathbb D^*)$, the function $\Phi=\log(F_\mu)'$ belongs to the space $B(\mathbb D)$ of  
{\it Bloch functions} on $\mathbb D$, with seminorm $\Vert \Phi \Vert_B=\sup_{z \in \mathbb D}(1-|z|^2)|\Phi'(z)|$ finite.  
Here, $\Phi'$ is the pre-Schwarzian derivative $N_{F_\mu}=(F_\mu)''/(F_\mu)'$ of $F_\mu$.  
By identifying Bloch functions that differ by a constant, we regard  
$B(\mathbb D)$ as a Banach space with norm $\Vert \Phi \Vert_B$.  
We call the correspondence $L:M(\mathbb D^*) \to B(\mathbb D)$, given by $\mu \mapsto \log(F_\mu)'$,  
the {\it pre-Schwarzian derivative map}. 
Strictly speaking, the pre-Schwarzian derivative $N_{F_\mu}$ is the derivative of $\Phi=\log(F_\mu)'$, 
so the terminology may be somewhat misleading. However, since the norm on $B(\mathbb D)$ is defined in terms of $\Phi'$, 
we adopt this name. Alternatively, one might think of ``pre'' here as indicating a ``pre$^2$'' version.  
As in the case of $S$, the map $L$ is holomorphic.

%In this section, we consider the map induced by the pre-Schwarzian derivative, 
%instead of the Schwarzian derivative discussed in the previous section.
%For $\mu \in M(\mathbb D^*)$, the function $\Phi=\log(F_\mu)'$ belongs to the space $B(\mathbb D)$ of  
%{\it Bloch functions} on $\mathbb D$, with seminorm $\Vert \Phi \Vert_B=\sup_{z \in \mathbb D}(1-|z|^2)|\Phi'(z)|$ finite.  
%Here, $\Phi'$ is the pre-Schwarzian derivative $N_{F_\mu}=(F_\mu)''/(F_\mu)'$ of $F_\mu$.  
%By identifying Bloch functions differing by a constant, we regard  
%$B(\mathbb D)$ as a Banach space with norm $\Vert \Phi \Vert_B$.  
%We call the correspondence $L:M(\mathbb D^*) \to B(\mathbb D)$, given by $\mu \mapsto \log(F_\mu)'$,  
%the {\it pre-Schwarzian derivative map}. 
%Actually, the pre-Schwarzian derivative $N_{F_\mu}$ is the derivative of $\Phi=\log(F_\mu)'$ and
%this name may be confusing, but since the norm of $B(\mathbb D)$ is given by taking the derivative of $\Phi$,
%we call $L$ in this way. Or we may regard ``pre'' as pre$^2$.
%Then, $L$ is holomorphic as in the case of $S$.

To study the relationship between $B(\mathbb D)$ and $A(\mathbb D)$,  
we define a map $J:B(\mathbb D) \to A(\mathbb D)$ by  
$\Phi \mapsto \Psi=\Phi''-(\Phi')^2/2$, according to the definition of the Schwarzian derivative.  
It is known that $J$ is holomorphic and satisfies $S=J \circ L$ (see \cite[Corollary A.3]{Teo}).  
Therefore, $L$ is also a holomorphic split submersion onto its image $\widetilde{\mathcal T}=L(M(\mathbb D^*))$ 
in $B(\mathbb D)$.  
By restricting $J$ to $\widetilde{\mathcal T}$, we obtain a holomorphic split submersion  
$J:\widetilde{\mathcal T} \to \alpha(T)$.  
The non-injectivity of $J$ and the elements of its inverse image are seen from the following proposition.

\begin{proposition}\label{affine}
{\rm (i)} 
For $\mu, \nu \in M(\mathbb D^*)$, we have $S_{F_{\mu}}=S_{F_{\nu}}$ if and only if  
$F_{\mu}=W \circ F_{\nu}$ on $\mathbb D$
for some M\"obius transformation $W$ of $\widehat{\mathbb C}$ such that  
$W \circ F_{\nu}(\mathbb D)$ is a bounded domain in $\mathbb C$. 
Moreover, $N_{F_{\mu}}=N_{F_{\nu}}$ if and only if
$F_{\mu}=W \circ F_{\nu}$ on $\mathbb D$
for some affine transformation $W$ of $\mathbb C$.
{\rm (ii)}
For any $\nu \in M(\mathbb D^*)$ and any M\"obius transformation $W$ such that $W \circ F_{\nu}(\mathbb D)$ is a bounded domain,  
there exists some $\mu' \in M(\mathbb D^*)$ such that $N_{W \circ F_{\nu}}=N_{F_{\mu'}}$. 
%$W \circ F_{\nu}$ maps $\mathbb D$ onto a bounded domain in $\mathbb C$. 
%Moreover, $W$ is an affine transformation of $\mathbb C$ if and only if  
%$N_{W \circ F_{\nu}}=N_{F_{\nu}}$.
\end{proposition}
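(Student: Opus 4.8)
The plan is to derive everything from the two composition (cocycle) identities
$S_{g\circ f}=(S_g\circ f)\,(f')^2+S_f$ and $N_{g\circ f}=(N_g\circ f)\,f'+N_f$, together with the vanishing $S_W\equiv 0$ for a M\"obius transformation $W$ and $N_W\equiv 0$ for an affine transformation $W$. Granting these, the two ``if'' implications of (i) are immediate: taking $f=F_\nu$ and $g=W$ gives $S_{W\circ F_\nu}=S_{F_\nu}$ when $W$ is M\"obius and $N_{W\circ F_\nu}=N_{F_\nu}$ when $W$ is affine, so $F_\mu=W\circ F_\nu$ forces $S_{F_\mu}=S_{F_\nu}$, respectively $N_{F_\mu}=N_{F_\nu}$.

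For the ``only if'' implications I would invoke the classical uniqueness behind these identities. If $S_{F_\mu}=S_{F_\nu}$ on the simply connected domain $\mathbb D$, the standard theory of the Schwarzian derivative produces a M\"obius $W$ with $F_\mu=W\circ F_\nu$; since $W\circ F_\nu(\mathbb D)=F_\mu(\mathbb D)$ is bounded by the very definition of $F_\mu$, the boundedness clause is automatic. If instead $N_{F_\mu}=N_{F_\nu}$, then $(\log(F_\mu)')'=(\log(F_\nu)')'$, so $(F_\mu)'=a\,(F_\nu)'$ for a constant $a\neq 0$; integrating gives $F_\mu=a\,F_\nu+b=W\circ F_\nu$ with $W$ affine. (The normalizations $F(0)=0$ and $(F)'(0)=1$ in fact force $a=1$ and $b=0$, so here $W$ is the identity, but only its affine form is asserted.)

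For part (ii), put $g=W\circ F_\nu$, which is conformal on $\mathbb D$ onto the bounded domain $\Omega=W\circ F_\nu(\mathbb D)$ (a quasidisk, since $F_\nu(\mathbb D)$ is a quasidisk and the pole of $W$ lies off $\overline{F_\nu(\mathbb D)}$ by the boundedness of $\Omega$). Normalizing by the affine map $A(w)=(w-g(0))/g'(0)$ and setting $\tilde g=A\circ g$, I obtain a conformal map of $\mathbb D$ onto the bounded domain $A(\Omega)$ with $\tilde g(0)=0$ and $\tilde g'(0)=1$, and the affine invariance of $N$ used above gives $N_{\tilde g}=N_g=N_{W\circ F_\nu}$. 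The proof then reduces to producing $\mu'\in M(\mathbb D^*)$ with $F_{\mu'}=\tilde g$ on $\mathbb D$, for then $N_{F_{\mu'}}=N_{\tilde g}=N_{W\circ F_\nu}$ as desired.

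This last construction is the step I expect to be the crux, and it is where the hypotheses genuinely enter. Viewed on all of $\widehat{\mathbb C}$, the map $\tilde g=A\circ W\circ F_\nu$ is a quasiconformal self-homeomorphism (as $F_\nu$ is quasiconformal on $\widehat{\mathbb C}$ and $A\circ W$ is M\"obius) carrying $\mathbb D^*$ onto the Jordan domain $\Omega^*=\widehat{\mathbb C}\setminus\overline{A(\Omega)}$, but it sends $\infty$ to the finite point $p:=A(W(\infty))\in\Omega^*$, whereas any $F_{\mu'}$ must fix $\infty$; hence $\tilde g$ itself cannot serve as $F_{\mu'}$. To remedy this I would keep $\tilde g$ on $\overline{\mathbb D}$ and modify it only on $\mathbb D^*$: choose a quasiconformal self-homeomorphism $\Psi$ of $\overline{\Omega^*}$ that is the identity on $\partial(A(\Omega))$ and sends $p$ to $\infty$---such $\Psi$ exists because the quasiconformal self-maps of a Jordan domain fixing its boundary pointwise act transitively on interior points---and set $F_{\mu'}=\tilde g$ on $\overline{\mathbb D}$ and $F_{\mu'}=\Psi\circ\tilde g$ on $\mathbb D^*$. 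The two definitions agree on $\mathbb S$, where $\Psi$ is the identity, and are quasiconformal on either side of the analytic circle $\mathbb S$, so they glue to a quasiconformal self-homeomorphism of $\widehat{\mathbb C}$ that is conformal on $\mathbb D$, normalized at $0$, and fixes $\infty$; its complex dilatation $\mu'$ then lies in $M(\mathbb D^*)$, and $F_{\mu'}=\tilde g$ on $\mathbb D$ by uniqueness. The delicate points are precisely the transitivity statement furnishing $\Psi$ and the quasiconformal gluing across $\mathbb S$.
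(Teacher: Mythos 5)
Your proposal is correct and follows essentially the same route as the paper: invariance/uniqueness of $S$ and $N$ under M\"obius resp.\ affine post-composition for (i), and for (ii) a quasiconformal correction that fixes the image of $\mathbb D$ while sending the image of $\infty$ back to $\infty$. The only difference is cosmetic: the paper takes its correction map $G$ to be a globally defined quasiconformal self-homeomorphism of $\widehat{\mathbb C}$ equal to the identity on a whole neighborhood of $\overline{W\circ F_\nu(\mathbb D)}$, which makes $G\circ W\circ F_\nu$ quasiconformal outright and avoids your gluing-across-$\mathbb S$ and removability step.
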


\begin{proof}
(i) By the invariance of the Schwarzian derivative $S$, we have $S_{F_{\mu}}=S_{F_{\nu}}$ if and only if
$F_{\mu}=W \circ F_{\nu}$ on $\mathbb D$ for some M\"obius transformations $W$ of $\widehat{\mathbb C}$.
However, since $\infty \notin \overline{F_{\mu}(\mathbb D)}$ by the normalization of $F_{\mu}$, $\infty$ is not contained in
$\overline{W \circ F_{\nu}(\mathbb D)}$.
This shows the first statement.
The second statement follows from the invariance of the pre-Schwarzian derivative $N$  
under post-composition with affine transformations of $\mathbb C$.

(ii) We note that $W \circ F_{\nu}(\infty)=w$ lies in  
$\widehat{\mathbb C} \setminus \overline{W \circ F_{\nu}(\mathbb D)}$.  
Then, one can construct a quasiconformal self-homeomorphism $G$ of $\widehat{\mathbb C}$  
with $G(w)=\infty$ that is the identity on a neighborhood of $\overline{W \circ F_{\nu}(\mathbb D)}$.  
Letting $\mu'$ be the complex dilatation of $G \circ W \circ F_{\nu}$ on $\mathbb D^*$,  
we define $F_{\mu'}=G \circ W \circ F_{\nu}$. This satisfies $N_{F_{\mu'}}=N_{W \circ F_{\nu}}$.
\end{proof}

Proposition \ref{affine} also applies to $M_B(\mathbb D^*)$ and $M_V(\mathbb D^*)$.  
To adapt statement (ii) to these cases, it suffices to observe the construction of  
$G$ and $\mu'$. Since $G$ is the identity on a neighborhood of $\overline{W \circ F_{\nu}(\mathbb D)}$,  
we have $\mu'(\zeta)=\nu(\zeta)$ on $1<|\zeta|<R$ for some $R>1$.  
Hence, if $\nu \in M_B(\mathbb D^*)$, then $\mu' \in M_B(\mathbb D^*)$;  
similarly for $M_V(\mathbb D^*)$.

Next, we introduce a different normalization for $F_\mu$.  
Namely, for any $\xi \in \mathbb S$, let $F_\mu^\xi$ denote the conformal map on $\mathbb D$  
that extends to a quasiconformal self-homeomorphism of $\widehat{\mathbb C}$ satisfying  
$F_\mu^\xi(0)=0$, $(F_\mu^\xi)'(0)=1$, and $F_\mu^\xi(\xi)=\infty$.  
Then, $\log (F_\mu^\xi)'$ also belongs to $B(\mathbb D)$, and  
$L^\xi:M(\mathbb D^*) \to B(\mathbb D)$, defined by $\mu \mapsto \log(F^\xi_\mu)'$,  
is a holomorphic split submersion onto its image $\mathcal T^\xi=L^\xi(M(\mathbb D^*))$.  
We remark that if we choose $\zeta \in \mathbb D^*$ for the normalization $\zeta \mapsto \infty$, the map $L^\zeta$  
satisfies $L^\zeta(M(\mathbb D^*))=L^\infty(M(\mathbb D^*))=\widetilde{\mathcal T}$. 
This can be also seen from Proposition \ref{affine} (ii). 

Zhuravlev \cite{Z} proved the following.

\begin{proposition}\label{Z}
The family $\{\mathcal T^\xi\}_{\xi \in \mathbb S}$ consists of mutually disjoint connected open subsets of  
$B(\mathbb D)$, each biholomorphically equivalent to $T$.  
They are all disjoint from $\widetilde{\mathcal T}$.  
Moreover, for a conformal homeomorphism $F$ of $\mathbb D$ that extends quasiconformally 
to $\widehat{\mathbb C}$,  
if $\Phi=\log F' \in B(\mathbb D)$, then  
$\Phi$ belongs to $\bigsqcup_{\xi \in \mathbb S} \mathcal T^\xi \sqcup \widetilde{\mathcal T}$.
\end{proposition}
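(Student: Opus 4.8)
The plan is to base everything on the affine rigidity of the pre-Schwarzian recorded in Proposition~\ref{affine}: since two elements of $B(\mathbb D)$ coincide exactly when the associated conformal maps differ by an affine transformation of $\mathbb C$, and the normalization $F(0)=0$, $F'(0)=1$ pins down such an affine map to be the identity, membership of a given $\Phi$ in $\mathcal T^\xi$ or $\widetilde{\mathcal T}$ is governed entirely by the single conformal homeomorphism it determines on $\mathbb D$ together with the location of the point that this map sends to $\infty$. I would treat the four assertions separately: the topological and analytic structure of each $\mathcal T^\xi$, the identification $\mathcal T^\xi\cong T$, the mutual disjointness (including disjointness from $\widetilde{\mathcal T}$), and the covering statement.

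For the structure, connectedness of $\mathcal T^\xi=L^\xi(M(\mathbb D^*))$ is immediate from connectedness of the ball $M(\mathbb D^*)$ and continuity of $L^\xi$, and openness in $B(\mathbb D)$ follows from the split submersion property of $L^\xi$, which furnishes a local holomorphic right inverse on a neighborhood in $B(\mathbb D)$ of each image point, so that this neighborhood lies in the image. To obtain $\mathcal T^\xi\cong T$ I would show that $L^\xi$ descends through $\pi$ to a bijection $T\to\mathcal T^\xi$. By affine rigidity and the normalization, $N_{F^\xi_\mu}=N_{F^\xi_\nu}$ forces $F^\xi_\mu=F^\xi_\nu$ on $\mathbb D$; since here $\xi\in\mathbb S$ makes $F^\xi_\mu(\xi)=\infty$ a genuine normalization of the conformal map itself, equality of these maps is equivalent to equality of their Schwarzians, which (being M\"obius invariant) equal $\alpha([\mu])$ and $\alpha([\nu])$, and hence to $[\mu]=[\nu]$ by injectivity of the Bers embedding. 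The descended bijection is holomorphic, and its inverse is holomorphic because it is obtained by composing $\pi$ with a local holomorphic section of $L^\xi$; thus it is a biholomorphism.

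For disjointness, suppose $\Phi$ lay in two of the sets. Affine rigidity together with the common normalization would force the two associated normalized conformal maps to agree on $\mathbb D$, hence on $\overline{\mathbb D}$ by continuity of the quasiconformal extensions. Comparing $\mathcal T^\xi$ with $\mathcal T^\eta$, both maps would then send their respective boundary points to $\infty$, so injectivity on $\overline{\mathbb D}$ gives $\xi=\eta$. Comparing $\mathcal T^\xi$ with $\widetilde{\mathcal T}$, the map from $\mathcal T^\xi$ has an unbounded image (it attains $\infty$ at the boundary point $\xi\in\mathbb S$), whereas the map from $\widetilde{\mathcal T}$ has bounded image, a contradiction.

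The covering statement is where I expect the real care. Given a conformal $F$ on $\mathbb D$ with a quasiconformal extension and $\Phi=\log F'\in B(\mathbb D)$, I would first normalize $F$ by an affine map (harmless in $B(\mathbb D)$) so that $F(0)=0$, $F'(0)=1$, and set $p=F^{-1}(\infty)$. The key observation is that $\Phi\in B(\mathbb D)$, being holomorphic on $\mathbb D$, forbids $F$ from having a pole inside $\mathbb D$: a pole would produce a logarithmic singularity in $\log F'$ and violate the Bloch bound, so $p\notin\mathbb D$. If $p\in\mathbb D^*$, then $F(\mathbb D)$ is bounded; since $\log F'$ depends only on $F|_{\mathbb D}$, I would replace the given extension by a quasiconformal self-homeomorphism fixing $\infty$ (as in the construction in Proposition~\ref{affine}(ii)) to realize $F|_{\mathbb D}=F_\mu|_{\mathbb D}$ for a suitable $\mu\in M(\mathbb D^*)$, giving $\Phi\in\widetilde{\mathcal T}$. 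If instead $p=\xi\in\mathbb S$, the given extension already sends $\xi$ to $\infty$, so reading off its complex dilatation $\mu$ on $\mathbb D^*$ yields $F=F^\xi_\mu$ and $\Phi\in\mathcal T^\xi$. The main obstacle is precisely this trichotomy on $p$ together with the bookkeeping that $\log F'$ sees only $F|_{\mathbb D}$ — giving the freedom to alter the extension outside $\overline{\mathbb D}$ — and the use of the Bloch condition to exclude $p\in\mathbb D$; a secondary technical point is the biholomorphism step, where the split submersion machinery, rather than mere bijectivity, is what supplies a holomorphic inverse.
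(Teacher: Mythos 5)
The paper offers no proof of this statement: it is quoted verbatim as Zhuravlev's theorem with a citation to \cite{Z}, so there is no in-paper argument to compare yours against. Judged on its own, your reconstruction is essentially correct and follows the standard route: affine rigidity of the pre-Schwarzian plus the normalization $F(0)=0$, $F'(0)=1$ reduces every question to the single conformal map $F|_{\mathbb D}$ and the location of $F^{-1}(\infty)$; the trichotomy $F^{-1}(\infty)\in\mathbb D$ (excluded), $\in\mathbb S$ (giving $\mathcal T^\xi$), $\in\mathbb D^*$ (giving $\widetilde{\mathcal T}$ after modifying the extension off $\overline{\mathbb D}$ as in Proposition~\ref{affine}(ii)) is exactly the right organizing principle for both the disjointness and the covering claims, and the biholomorphism $\mathcal T^\xi\cong T$ via descent of $L^\xi$ through $\pi$, with holomorphy of the inverse supplied by local sections, is the argument the paper itself implicitly relies on when it later asserts that the $\mathcal T^\xi_B$ are biholomorphic to $T_B$ under $J$.

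Two small points deserve tightening. First, your openness argument leans on ``the split submersion property of $L^\xi$,'' but with the paper's definition in Remark~\ref{explanation} a split submersion \emph{onto its image} only guarantees local right inverses on relative neighborhoods in $\mathcal T^\xi$, which does not by itself yield openness in $B(\mathbb D)$; what is actually needed (and what the Ahlfors--Weill type construction provides) is a local holomorphic right inverse defined on a full $B(\mathbb D)$-neighborhood of each image point. Since openness of $\mathcal T^\xi$ is part of the assertion being proved, you should either invoke that stronger form explicitly or sketch the quasiconformal-reflection construction. Second, in excluding a pole $p\in\mathbb D$, the cleanest obstruction is that $F'$ would have a double pole at $p$, so $\log F'$ has nontrivial monodromy $-4\pi i$ around $p$ and is not even a single-valued holomorphic function on $\mathbb D$; the blow-up of $(1-|z|^2)|F''/F'|\sim 2(1-|p|^2)/|z-p|$ that you cite also works, but the single-valuedness issue is the more basic one. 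Neither point is a gap in substance.
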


We now study the pre-Schwarzian derivative map on $M_B(\mathbb D^*)$ and $M_V(\mathbb D^*)$.  
The corresponding Banach spaces consist of {\it BMOA and VMOA functions} on $\mathbb D$:
\begin{align*}
{\rm BMOA}(\mathbb D)&=\{\Phi \in B(\mathbb D) \mid m_\Phi \in {\rm CM}(\mathbb D)\},
\ dm_\Phi:=(1-|z|^2)|\Phi'(z)|^2dxdy;\\
{\rm VMOA}(\mathbb D)&=\{\Phi \in B(\mathbb D) \mid m_\Phi \in {\rm CM}_0(\mathbb D)\}.
\end{align*}
The norm on ${\rm BMOA}(\mathbb D)$ is defined by $\Vert \Phi \Vert_{\rm BMOA} =\Vert m_\Phi \Vert_c^{1/2}$.  
Then, ${\rm VMOA}(\mathbb D)$ is a closed subspace of ${\rm BMOA}(\mathbb D)$.  
Note that there are several equivalent conditions for BMOA (see \cite{Ga, Pom, Zhu}),  
but the above definition is the one adopted in \cite{AZ}. For their equivalence, see \cite[Theorem 6.5]{Gi}.  

The images of $M_B(\mathbb D^*)$ and $M_V(\mathbb D^*)$ under the pre-Schwarzian derivative map $L$,  
denoted by $\widetilde{\mathcal T}_B$ and $\widetilde{\mathcal T}_V$, are contained in  
${\rm BMOA}(\mathbb D)$ and ${\rm VMOA}(\mathbb D)$ respectively. Moreover, we have  
$\widetilde{\mathcal T}_B=\widetilde{\mathcal T} \cap {\rm BMOA}(\mathbb D)$ and  
$\widetilde{\mathcal T}_V=\widetilde{\mathcal T} \cap {\rm VMOA}(\mathbb D)$ (see \cite[p.143]{SW}).

We consider the map $J$ on ${\rm BMOA}(\mathbb D)$ and ${\rm VMOA}(\mathbb D)$.  
Then, their images lie in $A_B(\mathbb D)$ and $A_V(\mathbb D)$, respectively, and  
$J:{\rm BMOA}(\mathbb D) \to A_B(\mathbb D)$ is holomorphic.  
Since $J \circ L = S$, we obtain a holomorphic surjection  
$J: \widetilde{\mathcal T}_B \to \alpha(T_B)$ (where we use the same notation), with  
$J(\widetilde{\mathcal T}_V) = \alpha(T_V)$.  
The fact that $J$ is not injective on $\widetilde{\mathcal T}_B$ and $\widetilde{\mathcal T}_V$ also follows from  
Proposition \ref{affine}.

We now examine the other connected components in ${\rm BMOA}(\mathbb D)$, beyond $\widetilde{\mathcal T}_B$,  
that arise from $\log F'$ for quasiconformally extendable conformal homeomorphisms $F$ of $\mathbb D$.  
More precisely, they are given by the holomorphic map $L^\xi$ on $M_B(\mathbb D^*)$ 
for each $\xi \in \mathbb S$. Let $\mathcal T^\xi_B = L^\xi(M_B(\mathbb D^*))$,
which is contained in ${\rm BMOA}(\mathbb D)$.  
As noted in \cite{AZ}, Zhuravlev's theorem (Proposition \ref{Z}) holds true when  
$\mathcal T^\xi$ is replaced by $\mathcal T_B^\xi$. In fact, 
$\mathcal T^\xi_B = \mathcal T^\xi \cap {\rm BMOA}(\mathbb D)$ for all $\xi \in \mathbb S$,
and they are biholomorphically equivalent to $T_B$ under $J$
by \cite[Theorem 6.2]{SW}.  
%These statements ultimately stem from the fact that  
%$\log(z - \xi)$ belongs to ${\rm BMOA}(\mathbb D)$.

%\begin{proposition}
%$\mathcal T^\xi_B = \mathcal T^\xi \cap {\rm BMOA}(\mathbb D)$ for any $\xi \in \mathbb S$.
%\end{proposition}

In contrast, in the case of the VMO Teich\-m\"ul\-ler space $T_V$, with $M_V(\mathbb D^*)$ and ${\rm VMOA}(\mathbb D)$,  
Theorem \ref{onlyL} below shows that  
$L^\xi(M_V(\mathbb D^*)) \cap {\rm VMOA}(\mathbb D) =\emptyset$ for every $\xi \in \mathbb S$.  
This result is a consequence of a more general statement concerning the space $M_0(\mathbb D^*)$ of  
{\it asymptotically conformal} Beltrami coefficients, and the little Bloch space $B_0(\mathbb D)$, defined as follows:
\begin{align*}
M_0(\mathbb D^*) &= \{ \mu \in M(\mathbb D^*) \mid \lim_{t \to 0} \underset{|z|<1+t}{\mathrm{ess}\sup} |\mu(z)| = 0 \};\\
B_0(\mathbb D) &= \{ \Phi \in B(\mathbb D) \mid \lim_{t \to 0} \sup_{|z|>1-t} (1 - |z|^2) |\Phi'(z)| = 0 \}.
\end{align*}
It is known that both are closed subspaces of their respective ambient spaces, and that  
$L(M_0(\mathbb D^*)) \subset B_0(\mathbb D)$.

\begin{lemma}\label{noother}
Let $\nu \in M_0(\mathbb D^*)$. If $\Phi = \log(W \circ F_\nu)'$ belongs to  
$B_0(\mathbb D)$ for some M\"obius transformation $W$ of $\widehat{\mathbb C}$,  
then $W \circ F_\nu$ maps $\mathbb D$ onto a bounded domain in $\mathbb C$.
\end{lemma}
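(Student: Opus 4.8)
The plan is to analyze the pre-Schwarzian derivative $\Phi'=N_{W\circ F_\nu}$ through the chain rule and to exploit the asymptotic conformality of $F_\nu$ together with the quasidisk geometry of $\Omega:=F_\nu(\mathbb D)$. Throughout I write $p:=W^{-1}(\infty)$ for the pole of $W$.

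First I would dispose of the easy cases. For $\Phi=\log(W\circ F_\nu)'$ to be a well-defined holomorphic Bloch function on $\mathbb D$, the map $W\circ F_\nu$ must have no pole in $\mathbb D$, so $p\in\widehat{\mathbb C}\setminus\Omega$. If $p=\infty$ then $W$ is affine and $W(\Omega)$ is bounded; if $p\in\widehat{\mathbb C}\setminus\overline\Omega$ then again $W(\Omega)$ is bounded. Hence it suffices to derive a contradiction from the remaining possibility $p\in\partial\Omega$. Since $\Omega$ is a bounded quasidisk, hence a Jordan domain, $F_\nu$ extends to a homeomorphism of $\overline{\mathbb D}$ onto $\overline\Omega$, so there is a unique $\xi_0\in\mathbb S$ with $F_\nu(\xi_0)=p$.

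Next, the chain rule for the pre-Schwarzian derivative, applied to any Möbius $W$ with finite pole $p$, gives exactly
\begin{equation*}
\Phi'(z)=N_{W\circ F_\nu}(z)=N_{F_\nu}(z)-\frac{2F_\nu'(z)}{F_\nu(z)-p},
\end{equation*}
because $N_W(w)=-2/(w-p)$ independently of the remaining coefficients of $W$. Since $\nu\in M_0(\mathbb D^*)$ we have $\log F_\nu'\in B_0(\mathbb D)$, so $(1-|z|^2)|N_{F_\nu}(z)|\to 0$ as $|z|\to 1$. Thus the conclusion will follow once I show that the second term does not vanish in the limit along the radius toward $\xi_0$; concretely, I want
\begin{equation*}
\liminf_{r\to 1}(1-r^2)\frac{|F_\nu'(r\xi_0)|}{|F_\nu(r\xi_0)-p|}>0,
\end{equation*}
which forces $(1-|z|^2)|\Phi'(z)|$ to stay bounded away from $0$ along $z=r\xi_0$, contradicting $\Phi\in B_0(\mathbb D)$ (whose defining condition is uniform as $|z|\to 1$).

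The heart of the matter, and the step I expect to be the main obstacle, is this geometric lower bound. By the Koebe distortion theorem, $(1-r^2)|F_\nu'(r\xi_0)|$ is comparable to $\operatorname{dist}(F_\nu(r\xi_0),\partial\Omega)$, and trivially $|F_\nu(r\xi_0)-p|\ge\operatorname{dist}(F_\nu(r\xi_0),\partial\Omega)$ because $p\in\partial\Omega$; the real content is the reverse inequality $|F_\nu(r\xi_0)-p|\le C\operatorname{dist}(F_\nu(r\xi_0),\partial\Omega)$, that is, that the radial image curve approaches $p$ non-tangentially. This is where the quasidisk hypothesis is essential: a bounded quasidisk is a John domain, and John domains are characterized by the estimate $\operatorname{diam}F_\nu([z,z/|z|))\le C\operatorname{dist}(F_\nu(z),\partial\Omega)$ for all $z\in\mathbb D$, which with $z=r\xi_0$ yields exactly the required bound. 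Combining the two comparisons gives a positive liminf and hence the contradiction. Alternatively, one could use asymptotic conformality quantitatively to show that the ratio in fact tends to the nonzero value attained by the model map $F=\mathrm{id}$ on $\mathbb D$ with $p=\xi_0$, but the one-sided lower bound already suffices.
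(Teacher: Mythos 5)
Your proposal is correct and follows essentially the same route as the paper: reduce to excluding the case $p\in\partial F_\nu(\mathbb D)$, use the chain rule $\Phi'=N_{F_\nu}-2F_\nu'/(F_\nu-p)$ together with $L(M_0(\mathbb D^*))\subset B_0(\mathbb D)$ to isolate the Möbius term, convert $(1-|z|^2)|F_\nu'(z)|$ into the boundary distance via Koebe distortion, and derive the contradiction along the radius toward $F_\nu^{-1}(p)$ from the John-domain property of quasidisks (the paper cites the same Pommerenke result). Your explicit statement of the John diameter estimate is a slightly more detailed rendering of the step the paper handles by citation, but the argument is the same.
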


\begin{proof}
Let $a = W^{-1}(\infty)$. Since $W \circ F_\nu$ is holomorphic on $\mathbb D$,  
it follows that $a \notin F_\nu(\mathbb D)$, so $a \in \widehat{\mathbb C} \setminus F_\nu(\mathbb D)$.  
We aim to exclude the case where $a \in \partial F_\nu(\mathbb D)$.  
If we can do so, then it must be that $a \notin \overline{F_\nu(\mathbb D)}$,  
implying that $W \circ F_\nu(\mathbb D)$ is bounded.

By straightforward computation, we obtain:
\begin{equation}\label{chain}
\Phi'(z) = N_{W \circ F_\nu}(z) = N_W(F_\nu(z)) \cdot F_\nu'(z) + N_{F_\nu}(z) 
= \frac{-2F_\nu'(z)}{F_\nu(z) - a} + N_{F_\nu}(z).
\end{equation}
Hence, the assumptions $\nu \in M_0(\mathbb D^*)$ and $\Phi \in B_0(\mathbb D)$ imply that
\begin{equation}\label{littlecondition}
\lim_{|z| \to 1} (1 - |z|^2) \cdot \frac{|F_\nu'(z)|}{|F_\nu(z) - a|} = 0.
\end{equation}
Let $\delta(w)$ denote the reciprocal of the hyperbolic density in $F_\nu(\mathbb D)$,  
so that $\delta(F_\nu(z)) = (1-|z|^2) |F_\nu'(z)|$.  
It is well known that $\delta(w)$ is comparable to the Euclidean distance  
$d(w, \partial F_\nu(\mathbb D))$ from $w$ to the boundary.  
Then, from \eqref{littlecondition} we obtain:
\begin{equation}\label{littlecondition2}
\lim_{|z| \to 1} \frac{d(F_\nu(z), \partial F_\nu(\mathbb D))}{|F_\nu(z) - a|} = 0.
\end{equation}

Suppose $a \in \partial F_\nu(\mathbb D)$, and let $z_0 = F_\nu^{-1}(a) \in \mathbb S$.  
For $z = r z_0 \in \mathbb D$ with $0 < r < 1$, the limit in \eqref{littlecondition2} implies:
\begin{equation}\label{John}
\lim_{r \to 1} \frac{d(F_\nu(r z_0), \partial F_\nu(\mathbb D))}{|F_\nu(r z_0) - F_\nu(z_0)|} = 0.
\end{equation}
However, the fraction in \eqref{John} is bounded away from zero,  
because $F_\nu(\mathbb D)$ is a John domain  
(see \cite[Corollary 5.3]{Pom}).  
This contradiction proves that $a \notin \partial F_\nu(\mathbb D)$, as required.
\end{proof}

Lemma \ref{noother} implies that there are no components $L^\xi(M_0(\mathbb D^*))$ 
with $\xi \in \mathbb S$ in $B_0(\mathbb D)$.  
Since ${\rm VMOA}(\mathbb D) \subset B_0(\mathbb D)$ (see \cite[Corollary 5.2]{Gi}, \cite[p.280]{Zhu}),
we obtain the following result.

\begin{theorem}\label{onlyL}
%The set of all holomorphic functions $\Phi = \log (W \circ F_\mu)'$ in ${\rm VMOA}(\mathbb D)$,  
%given by M\"obius transformations $W$ of $\widehat{\mathbb C}$ and  
%by $\mu \in M_V(\mathbb D^*)$, coincides with $\widetilde{\mathcal T}_V$.
The set of all holomorphic functions $\Phi = \log F'$ in ${\rm VMOA}(\mathbb D)$,
given by conformal homeomorphisms $F$ of $\mathbb D$ that extend quasiconformally 
to $\widehat{\mathbb C}$ with their complex dilatations in $M_V(\mathbb D^*)$,  
coincides with $\widetilde{\mathcal T}_V$.
\end{theorem}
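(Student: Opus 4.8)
The plan is to prove the two inclusions. One direction is immediate: for $\mu\in M_V(\mathbb D^*)$ the function $\log (F_\mu)'=L(\mu)$ lies in $\widetilde{\mathcal T}_V\subset \mathrm{VMOA}(\mathbb D)$, and $F_\mu$ is a conformal homeomorphism of $\mathbb D$ whose quasiconformal extension has dilatation $\mu\in M_V(\mathbb D^*)$; hence $\widetilde{\mathcal T}_V$ is contained in the set described. For the reverse inclusion, let $F$ be a conformal homeomorphism of $\mathbb D$ whose quasiconformal extension has dilatation $\mu\in M_V(\mathbb D^*)$ and with $\Phi=\log F'\in \mathrm{VMOA}(\mathbb D)$. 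Since $\Phi$ is holomorphic on $\mathbb D$, the map $F$ has no pole there, so $\infty\notin F(\mathbb D)$. As $F$ and the normalized map $F_\mu$ share the dilatation $\mu$ on $\mathbb D^*$ and are conformal on $\mathbb D$, they differ by post-composition with a M\"obius transformation, $F=W\circ F_\mu$. Setting $a=W^{-1}(\infty)\in\widehat{\mathbb C}$, holomorphy of $F$ gives $a\notin F_\mu(\mathbb D)$, and the matter reduces to excluding the possibility $a\in\partial F_\mu(\mathbb D)$; once this is done, $a\notin\overline{F_\mu(\mathbb D)}$ and $F(\mathbb D)=W(F_\mu(\mathbb D))$ is bounded. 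When $W$ is affine ($a=\infty$) this is automatic since $F_\mu(\mathbb D)$ is bounded, so I may assume $a\in\mathbb C$.

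To exclude $a\in\partial F_\mu(\mathbb D)$ I would reproduce the computation of Lemma~\ref{noother} with $\nu$ replaced by $\mu$. The chain rule \eqref{chain} for $F=W\circ F_\mu$ expresses $\Phi'$ as the sum of $-2F_\mu'(z)/(F_\mu(z)-a)$ and $N_{F_\mu}(z)$. Two little-Bloch facts drive the argument: $(1-|z|^2)|\Phi'(z)|\to 0$ because $\Phi\in\mathrm{VMOA}(\mathbb D)\subset B_0(\mathbb D)$, and $(1-|z|^2)|N_{F_\mu}(z)|\to 0$ because $\log(F_\mu)'=L(\mu)\in\widetilde{\mathcal T}_V\subset\mathrm{VMOA}(\mathbb D)\subset B_0(\mathbb D)$. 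Subtracting gives \eqref{littlecondition}, and since $(1-|z|^2)|F_\mu'(z)|$ is comparable to $d(F_\mu(z),\partial F_\mu(\mathbb D))$, this upgrades to \eqref{littlecondition2}. Restricting to the radius $z=rz_0$ with $z_0=F_\mu^{-1}(a)\in\mathbb S$ then contradicts the John-domain estimate \eqref{John}, whose left-hand side is bounded away from $0$ because the bounded quasidisk $F_\mu(\mathbb D)$ is a John domain. Hence $a\notin\partial F_\mu(\mathbb D)$, and $F(\mathbb D)$ is bounded.

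With $F(\mathbb D)$ bounded, Proposition~\ref{affine}(ii) together with the remark following it produces $\mu'\in M_V(\mathbb D^*)$ with $N_F=N_{F_{\mu'}}$, so $\Phi=\log F'=L(\mu')\in\widetilde{\mathcal T}_V$ (equivalently, $\Phi\in\widetilde{\mathcal T}\cap\mathrm{VMOA}(\mathbb D)=\widetilde{\mathcal T}_V$), which closes the reverse inclusion. The step I expect to be the main obstacle is the second little-Bloch fact, $(1-|z|^2)|N_{F_\mu}(z)|\to 0$. It would be tempting to derive this from $\mu\in M_0(\mathbb D^*)$ via $L(M_0(\mathbb D^*))\subset B_0(\mathbb D)$, but that route is unavailable: the defining condition of $M_V(\mathbb D^*)$ is a vanishing-Carleson (averaged) condition and does not in general imply the uniform near-boundary decay $|\mu(z)|\to 0$ as $|z|\to 1^+$ that defines $M_0(\mathbb D^*)$, so $M_V(\mathbb D^*)\not\subset M_0(\mathbb D^*)$. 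The correct and essential source of the little-Bloch property is instead the inclusion $\widetilde{\mathcal T}_V\subset\mathrm{VMOA}(\mathbb D)\subset B_0(\mathbb D)$; recognizing that this, rather than asymptotic conformality of the coefficient, is what licenses the argument of Lemma~\ref{noother} is the heart of the proof.
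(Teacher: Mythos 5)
Your proof is correct, and its core --- the decomposition \eqref{chain}, the two little-Bloch limits, and the John-domain contradiction along a radius --- is precisely the argument of Lemma \ref{noother}, with the endgame via Proposition \ref{affine}\,(ii) and the remark following it matching the paper's intent. The one substantive difference is in how that lemma is brought to bear. The paper deduces the theorem by citing Lemma \ref{noother} as stated, i.e., for coefficients in $M_0(\mathbb D^*)$, together with $\mathrm{VMOA}(\mathbb D)\subset B_0(\mathbb D)$; read literally, this requires first writing $F$ as $W\circ F_\nu$ with $\nu\in M_0(\mathbb D^*)$, which rests on the (true but unstated) fact that every point of $T_V$ admits an asymptotically conformal representative. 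You instead rerun the lemma's proof with $\mu\in M_V(\mathbb D^*)$ itself, noting that the only role of the hypothesis $\nu\in M_0(\mathbb D^*)$ is to yield $(1-|z|^2)|N_{F_\nu}(z)|\to 0$, and that for $\mu\in M_V(\mathbb D^*)$ this follows from $L(\mu)\in\widetilde{\mathcal T}_V\subset\mathrm{VMOA}(\mathbb D)\subset B_0(\mathbb D)$. Your side remark that $M_V(\mathbb D^*)\not\subset M_0(\mathbb D^*)$ is correct --- the vanishing Carleson condition is an averaged one and does not force the essential supremum of $|\mu|$ to decay near $\mathbb S$ --- so some such adaptation is genuinely needed unless one imports $T_V\subset T_0$. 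Both routes establish the same statement; yours is self-contained at exactly the point where the paper is terse.
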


\section{The structure of the fiber space}

We investigate the structure of the holomorphic surjection $J:\widetilde{\mathcal T}_B \to \alpha(T_B)$
in greater detail.  
From the arguments in the previous section, we observe that the multivalency of $J$ arises from the post-composition
$W_a \circ F_\nu$ of the quasiconformally extendable conformal homeomorphism 
$F_\nu$ for $\nu \in M_B(\mathbb D^*)$
with a M\"obius transformation $W_a$ sending $a \in F_\nu(\mathbb D^*)$ to $\infty$. 
Since $W_a \circ F_\nu$ is uniquely determined by $\nu$ and $a$, 
up to post-composition with affine transformations of $\mathbb C$,
we consider the map 
$\widetilde L(\nu,a)=\log(W_a \circ F_\nu)' \in \widetilde{\mathcal T}_B$. This is 
defined on the fiber space over $M_B(\mathbb D^*)$ given by
\[
\widetilde M_B(\mathbb D^*)=\{(\nu,a) \in M_B(\mathbb D^*) \times \widehat{\mathbb C} 
\mid a \in F_{\nu}(\mathbb D^*)\},
\]
which is a domain in the product manifold $M_B(\mathbb D^*) \times \widehat{\mathbb C}$. 
Note that $\widetilde L(\nu,\infty)=L(\nu)$.

\begin{lemma}\label{localbound}
$\widetilde L:\widetilde M_B(\mathbb D^*) \to \widetilde{\mathcal T}_B$ is holomorphic.
\end{lemma}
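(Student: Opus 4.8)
The plan is to write $\widetilde L$ as the sum of the already-known holomorphic map $L$ and an explicit correction term coming from the M\"obius factor, and then to establish holomorphy by combining a pointwise (weak) holomorphy statement with a uniform norm bound; the latter is the reason the lemma is named \emph{localbound}. Concretely, normalizing $W_a(w)=1/(w-a)$ for finite $a$, one has, modulo an additive constant that is irrelevant in $B(\mathbb D)$,
$$
\widetilde L(\nu,a)=L(\nu)+\Lambda(\nu,a),\qquad \Lambda(\nu,a):=-2\log\!\big(F_\nu(\cdot)-a\big),
$$
with pre-Schwarzian $\widetilde L(\nu,a)'(z)=\dfrac{-2F_\nu'(z)}{F_\nu(z)-a}+N_{F_\nu}(z)$, matching \eqref{chain}. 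Since $L:M_B(\mathbb D^*)\to {\rm BMOA}(\mathbb D)$ is holomorphic, it suffices to treat the whole of $\widetilde L$ by the criterion described next.

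I would invoke the standard principle that a locally bounded map into a complex Banach space is holomorphic as soon as it is weakly holomorphic against a separating family of continuous linear functionals. On ${\rm BMOA}(\mathbb D)$ the derivative point-evaluations $\ell_{z_0}:\Phi\mapsto \Phi'(z_0)$, $z_0\in\mathbb D$, are bounded (because $(1-|z_0|^2)|\Phi'(z_0)|\le \|\Phi\|_B\lesssim \|\Phi\|_{\rm BMOA}$) and separate points modulo constants. For each fixed $z_0$ the scalar function $(\nu,a)\mapsto \widetilde L(\nu,a)'(z_0)$ is holomorphic: it is a rational, pole-free function of $a$ on the region $a\in F_\nu(\mathbb D^*)$, and it depends holomorphically on $\nu$ by the Ahlfors--Bers holomorphic dependence of the normalized solution $F_\nu$ and of its interior derivatives on the Beltrami coefficient (equivalently, $\nu\mapsto N_{F_\nu}(z_0)=L(\nu)'(z_0)$ is holomorphic because $L$ is). Thus the weak-holomorphy hypothesis holds.

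The main step, and the point I expect to be the chief obstacle, is the local boundedness of $(\nu,a)\mapsto \|\widetilde L(\nu,a)\|_{\rm BMOA}$. Here I would use the elementary but crucial observation that for a conformal map $F_\nu$ of $\mathbb D$ onto a bounded domain the measure $(1-|z|^2)|F_\nu'(z)|^2\,dxdy$ is automatically Carleson with $\|\cdot\|_c\le 2\,{\rm Area}(F_\nu(\mathbb D))$: on $\Delta(x,r)\cap\mathbb D$ one has $1-|z|^2<2r$, whence $\int_{\Delta(x,r)\cap\mathbb D}(1-|z|^2)|F_\nu'|^2\,dxdy\le 2r\,{\rm Area}(F_\nu(\mathbb D))$ by injectivity of $F_\nu$. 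Fixing a neighborhood $U$ of a point $(\nu_0,a_0)$ with $a_0\ne\infty$, the continuity of $\nu\mapsto F_\nu$ in the uniform norm on $\widehat{\mathbb C}$ makes the closed quasidisks $F_\nu(\overline{\mathbb D})$ vary continuously, so there exist $\varepsilon>0$ and $C<\infty$ with $|F_\nu(z)-a|\ge\varepsilon$ for all $z\in\mathbb D$ and ${\rm Area}(F_\nu(\mathbb D))\le C$ throughout $U$. Consequently $(1-|z|^2)|\Lambda(\nu,a)'(z)|^2\le (4/\varepsilon^2)(1-|z|^2)|F_\nu'(z)|^2$, giving $\|\Lambda(\nu,a)\|_{\rm BMOA}^2\le (8/\varepsilon^2)C$ on $U$; together with the local boundedness of $\|L(\nu)\|_{\rm BMOA}$ this bounds $\|\widetilde L(\nu,a)\|_{\rm BMOA}$ on $U$.

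Finally I would dispose of the point $a_0=\infty$ by passing to the chart $a=1/b$ near $b=0$ and using instead the normalization $w\mapsto w/(1-bw)$, which tends to the identity and recovers $\widetilde L(\nu,\infty)=L(\nu)$; since any two M\"obius maps sending $a$ to $\infty$ differ by post-composition with an affine map, the value of $\widetilde L(\nu,a)$ in $B(\mathbb D)$ is independent of this choice, so the two descriptions agree and the argument above applies verbatim near $a=\infty$. The delicate points to watch are the uniformity of $\varepsilon$ and of ${\rm Area}(F_\nu(\mathbb D))$ over $U$, that is, the continuous dependence of the entire quasidisk (not merely of $F_\nu$ on compacta) on $\nu$, and the joint holomorphy of $\ell_{z_0}\circ\widetilde L$ in the finite-dimensional variable $a$ together with the Banach variable $\nu$; both are supplied by the ingredients assembled above.
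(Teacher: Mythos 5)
Your proposal is correct and takes essentially the same route as the paper: both use the decomposition of the pre-Schwarzian from \eqref{chain}, bound the Carleson norm of the correction term by the area of $F_\nu(\mathbb D)$ divided by the squared distance from $a$ to $\partial F_\nu(\mathbb D)$ (each locally uniform in $(\nu,a)$) to get local boundedness, and then upgrade pointwise/weak holomorphy to holomorphy via the standard locally-bounded-plus-G\^ateaux criterion. Your explicit handling of the chart at $a=\infty$ and the separating family of derivative evaluations are just slightly more detailed renderings of the same argument.
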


\begin{proof}
Let $\Phi_0=\widetilde L(\nu,\infty)=\log (F_\nu)'$ and $\Phi=\widetilde L(\nu,a)=\log (W_a \circ F_\nu)'$.
By \eqref{chain}, we have
\begin{align}\label{1st2nd}
\Phi'(z)
%=N_{W_a \circ F_\nu}(z)=N_{W_a} \circ F_\nu(z)\cdot (F_\nu)'(z)+N_{F_\nu}(z)
=\frac{-2(F_\nu)'(z)}{{F_\nu}(z)-a}+\Phi_0'(z).
\end{align}
When $a=\infty$, this simply reads as $\Phi'(z)=\Phi_0'(z)$; hence, we may assume $a \neq \infty$.
Since $a \in F_\nu(\mathbb D^*)$, the denominator $F_\nu(z)-a$ for $z \in \mathbb D$ is 
bounded below by the distance $d(a,\partial F_\nu(\mathbb D))$, which is positive and bounded away from zero
uniformly in $z$ and locally uniformly in $a$. Therefore, it suffices to estimate the norms of $(F_\nu)'$ 
for the evaluation of $\Vert \Phi-\Phi_0 \Vert_{{\rm BMOA}}$:
\begin{align*}
\Vert \Phi-\Phi_0 \Vert_{\rm BMOA}&\lesssim
\sup_{x \in \mathbb S,\,r>0} \frac{1}{r}\int_{\Delta(x,r) \cap \mathbb D}(1-|z|^2)|(F_\nu)'(z)|^2 dxdy\\
&\lesssim \sup_{x \in \mathbb S,\,r>0} \int_{\Delta(x,r) \cap \mathbb D}|(F_\nu)'(z)|^2 dxdy \leq \int_{\mathbb D}|(F_\nu)'(z)|^2 dxdy <\infty.
\end{align*}

This estimate shows that $\Vert \widetilde L(\nu,a) \Vert_{{\rm BMOA}}$ is bounded 
by a constant
depending only on $\Vert \log(F_\nu)' \Vert_{\rm BMOA}$, 
$d(a,\partial F_\nu(\mathbb D))$, and the Euclidean area of $F_\nu(\mathbb D)$.
For a given $(\nu_0,a_0) \in \widetilde M_B(\mathbb D^*)$,
these quantities vary within a bounded range 
as $\nu \in M_B(\mathbb D^*)$ and $a \in F_{\nu_0}(\mathbb D^*)$ vary slightly around $(\nu_0,a_0)$.
This implies that $\widetilde L$ is locally bounded.

Under this local boundedness, 
if $\widetilde L$ is G\^{a}teaux holomorphic, then it is holomorphic (see \cite[Theorem 14.9]{Ch}).
Moreover, the G\^{a}teaux holomorphy of $\widetilde L$ follows from the condition that
for each fixed $z \in \mathbb D$,
$\widetilde L(\nu,a)(z)=\log (W_{a} \circ F_{\nu})'(z)$ is 
G\^{a}teaux holomorphic as a complex-valued function (see \cite[Lemma 6.1]{WM-2}). 
This can be verified by the holomorphic dependence of quasiconformal mappings on the Beltrami coefficients (see \cite[Theorem V.5]{Ah}).
Thus, we conclude that $\widetilde L$ is holomorphic on $\widetilde M_B(\mathbb D^*)$.
\end{proof}

\begin{proposition}\label{bundle}
$J:\widetilde{\mathcal T}_B \to \alpha(T_B)$ is a holomorphic split submersion. 
\end{proposition}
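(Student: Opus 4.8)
The plan is to verify the \emph{strong} form of the split submersion property through the criterion recorded in Remark~\ref{explanation}: it suffices to produce, at each point $\Phi_0 \in \widetilde{\mathcal T}_B$, a local holomorphic right inverse $\sigma$ of $J$, defined on a neighborhood of $\Psi_0 = J(\Phi_0)$ in $\alpha(T_B)$, satisfying the basepoint condition $\sigma(\Psi_0) = \Phi_0$. Holomorphy of $J$ and surjectivity onto $\alpha(T_B)$ are already established, so the entire content lies in arranging that the right inverse passes through the prescribed point.

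First I would write $\Phi_0 = L(\mu_0)$ for some $\mu_0 \in M_B(\mathbb D^*)$ and set $\Psi_0 = S(\mu_0) = J(\Phi_0)$. Proposition~\ref{weaksubmersion} supplies a local holomorphic right inverse $s$ of $S$ on a neighborhood $U$ of $\Psi_0$ in $A_B(\mathbb D)$, so that $S \circ s = \mathrm{id}_U$. Putting $\mu_1 = s(\Psi_0)$, we have $S_{F_{\mu_1}} = \Psi_0 = S_{F_{\mu_0}}$, and Proposition~\ref{affine}(i) yields a M\"obius transformation $W$, with $W \circ F_{\mu_0}(\mathbb D)$ bounded, such that $F_{\mu_1} = W \circ F_{\mu_0}$ on $\mathbb D$. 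Because $W$ carries the bounded domain $F_{\mu_0}(\mathbb D)$ onto the bounded domain $F_{\mu_1}(\mathbb D)$, it maps the complementary exterior region onto the exterior, whence $a_0 := W(\infty) \in F_{\mu_1}(\mathbb D^*)$.

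Then I would define $\sigma(\Psi) = \widetilde L(s(\Psi), a_0)$, holding the fiber parameter fixed at $a_0$. This is legitimate since $a_0$ lies in the open set $F_{\mu_1}(\mathbb D^*) = F_{s(\Psi_0)}(\mathbb D^*)$ at positive distance from $\partial F_{\mu_1}(\mathbb D)$, so by continuity $a_0 \in F_{s(\Psi)}(\mathbb D^*)$, hence $(s(\Psi), a_0) \in \widetilde M_B(\mathbb D^*)$, for all $\Psi$ in a smaller neighborhood; holomorphy of $\sigma$ then follows from that of $s$ together with Lemma~\ref{localbound}. That $\sigma$ is a right inverse is immediate from the invariance of the Schwarzian under M\"obius post-composition, namely $J(\widetilde L(\nu, a)) = S_{F_\nu}$, which gives $J \circ \sigma = S \circ s = \mathrm{id}$. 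To see $\sigma(\Psi_0) = \Phi_0$, observe that $\widetilde L(\mu_1, a_0) = \log(W_{a_0} \circ F_{\mu_1})'$ and that $W_{a_0}$ sends $a_0 = W(\infty)$ to $\infty$; hence $W_{a_0} \circ W$ fixes $\infty$ and is therefore affine, so $W_{a_0} \circ F_{\mu_1} = (W_{a_0} \circ W) \circ F_{\mu_0}$ is an affine image of $F_{\mu_0}$, and the affine invariance of the pre-Schwarzian (Proposition~\ref{affine}(i)) gives $\sigma(\Psi_0) = \log(F_{\mu_0})' = \Phi_0$ in $\mathrm{BMOA}(\mathbb D)$.

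The main obstacle is exactly this strong-sense requirement $\sigma(\Psi_0) = \Phi_0$: the naive composite $L \circ s$ is already a weak holomorphic right inverse of $J$, but it lands at $L(\mu_1)$, which need not equal $\Phi_0$, since $s$ recovers only some preimage of $\Psi_0$ under $S$. The device that resolves this is the explicit description of the fiber of $J$ furnished by the auxiliary map $\widetilde L$ of Lemma~\ref{localbound}: the fiber over $\Psi_0$ is swept out by the parameter $a \in F_{\mu_1}(\mathbb D^*)$, and choosing the single value $a_0 = W(\infty)$ slides $L \circ s$ along the fiber to the desired basepoint, while the openness of $F_\nu(\mathbb D^*)$ keeps $a_0$ admissible for all nearby $\nu = s(\Psi)$. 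Once $\sigma$ is constructed, differentiating $J \circ \sigma = \mathrm{id}$ at $\Psi_0$ exhibits $d_{\Psi_0}\sigma$ as a bounded complex-linear right inverse of $d_{\Phi_0} J$, confirming that $J$ is a holomorphic split submersion.
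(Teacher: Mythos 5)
Your proposal is correct and follows essentially the same route as the paper: take the weak local holomorphic right inverse $s$ of $S$ from Proposition~\ref{weaksubmersion}, then post-compose with $\widetilde L(\cdot,a_0)$ for a fixed fiber parameter $a_0$ chosen (via Proposition~\ref{affine}) so that the resulting section passes through the prescribed point, with holomorphy supplied by Lemma~\ref{localbound}. The only difference is that you spell out explicitly how $a_0=W(\infty)$ is found and why $\sigma(\Psi_0)=\Phi_0$, which the paper leaves as a brief appeal to Proposition~\ref{affine}.
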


\begin{proof}
Let $\Phi \in \widetilde{\mathcal T}_B$ and set $\Psi_0=J(\Phi) \in \alpha(T_B)$. Then,
there exists a
neighborhood $V_{\Psi_0}$ of $\Psi_0$ in $\alpha(T_B)$ 
and a holomorphic map $\sigma:V_{\Psi_0} \to M_B(\mathbb D^*)$ such that
$S \circ \sigma$ is the identity on $V_{\Psi_0}$ by Proposition \ref{weaksubmersion}. 
Let $\Phi_0=L \circ \sigma(\Psi_0)$, which may differ 
from $\Phi$.
Since $\Phi_0$ can be written as $\log (F_{\sigma(\Psi_0)})' \in \widetilde{\mathcal T}_B$, Proposition \ref{affine} implies that
$\Phi=\log (W_a \circ F_{\sigma(\Psi_0)})'$ for some $a \in F_{\sigma(\Psi_0)}(\mathbb D^*)$,
i.e., $\Phi=\widetilde L(\sigma(\Psi_0),a)$.

Fixing this $a$, we define the map
$\widetilde L(\sigma(\cdot),a):V_{\Psi_0} \to \widetilde{\mathcal T}_B$, shrinking $V_{\Psi_0}$ if necessary.
By Lemma \ref{localbound}, this is a holomorphic map on $V_{\Psi_0}$.
Since $J \circ \widetilde L(\sigma(\Psi),a)=\Psi$ for all $\Psi \in V_{\Psi_0}$, the map
$\widetilde L(\sigma(\cdot),a)$ is a local holomorphic right inverse of $J$ passing through the point 
$\Phi=\widetilde L(\sigma(\Psi_0),a)$.
This shows that $J$ is a holomorphic split submersion.
\end{proof}

\begin{remark}
This proposition appears in \cite[Theorem 6.3]{SW}, but it is shown in the weaker form
mentioned in Remark \ref{explanation}.
In contrast to the above result,
it remains unknown whether the pre-Schwarzian derivative map $L:M_B(\mathbb D^*) \to {\rm BMOA}(\mathbb D)$ or
the Schwarzian derivative map $S:M_B(\mathbb D^*) \to A_B(\mathbb D)$ is a holomorphic split submersion onto its image.
\end{remark}

%From here on, we denote by $\widetilde F_\mu$ the quasiconformal extension of $F_\mu$ to $\mathbb D^*$.
%For any $\Psi_0 \in \alpha(T_B)$, the local holomorphic right inverse $\sigma:V_{\Psi_0} \to M_B(\mathbb D^*)$
%as in the above proof is constructed by producing a family of quasiconformal homeomorphisms $\widetilde F_\mu$,  
%explicitly defined by starting with a suitable quasiconformal reflection based on
%a bi-Lipschitz diffeo\-morphism $\widetilde F_\nu$ with $S(\nu)=\Psi_0$,
%and then solving the Schwarzian differential equation for $\Psi \in V_{\Psi_0}$ on $F^\nu(\mathbb D)$.
%See \cite[Section II.4.2]{Le} for the general arguments in the case of the universal Teich\-m\"ul\-ler space.
%The following proposition explains this construction in such a manner that it possesses a certain extra property.

From here on, we denote by $\widetilde F_\mu$ the quasiconformal extension of $F_\mu$ to $\mathbb D^*$.
For any $\Psi_0 \in \alpha(T_B)$, the local holomorphic right inverse $\sigma:V_{\Psi_0} \to M_B(\mathbb D^*)$
as in the proof above is constructed by producing a family of quasiconformal homeomorphisms $\widetilde F_\mu$,  
obtained explicitly by starting with a suitable quasiconformal reflection based on
a bi-Lipschitz diffeo\-morphism $\widetilde F_\nu$ with $S(\nu)=\Psi_0$,  
and then solving the Schwarzian differential equation for $\Psi \in V_{\Psi_0}$ on $F^\nu(\mathbb D)$.
See \cite[Section~II.4.2]{Le} for the general argument in the case of the universal Teich\-m\"ul\-ler space.
The following proposition clarifies this construction in such a way that it enjoys an additional property.

\begin{proposition}\label{real-analytic}
For any $\Psi_0 \in \alpha(T_B)$, there exists a
local holomorphic right inverse $\sigma:V_{\Psi_0} \to M_B(\mathbb D^*)$ of $S$ such that
$\widetilde F_{\sigma(\Psi)}$ is a real-analytic diffeomorphism (=bi-real-analytic homeomorphism) of 
$\mathbb D^*$ for 
any $\Psi \in V_{\Psi_0}$. Moreover, $\widetilde F_{\sigma(\Psi)}(\zeta)$ is jointly real-analytic
for $(\Psi, \zeta) \in V_{\Psi_0} \times \mathbb D^*$.
\end{proposition}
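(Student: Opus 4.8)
The plan is to take the Lehto--Bers construction of the local right inverse $\sigma$ recalled just before the statement and to strengthen the regularity of its building blocks from bi-Lipschitz to real-analytic, while keeping track of the Carleson condition that places the dilatations in $M_B(\mathbb D^*)$. Recall that $\sigma$ is assembled from two pieces. On $\mathbb D$ one solves the Schwarzian differential equation $S_g=\Psi$: writing $u_1(\Psi,\cdot),u_2(\Psi,\cdot)$ for the solutions of $u''+\tfrac12\Psi u=0$ with unit Wronskian and setting $g_\Psi=u_1/u_2$, normalized so that $g_{\Psi_0}=F_\nu|_{\mathbb D}$, one recovers the conformal map $F_{\sigma(\Psi)}|_{\mathbb D}=g_\Psi$. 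Since the coefficients of this ODE are holomorphic in $z$ and depend linearly, hence holomorphically, on the Banach parameter $\Psi\in A_B(\mathbb D)$, the solutions $u_i$, and therefore $g_\Psi(z)$, are holomorphic in $z$ and in $\Psi$ by the standard theory of ODEs depending holomorphically on a parameter. In particular this piece is already jointly real-analytic on $V_{\Psi_0}\times\mathbb D$, and nothing further is needed there; the entire content of the proposition concerns the extension to $\mathbb D^*$.

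For the base point I fix a representative with $S(F_\nu)=\Psi_0$; its restriction to $\mathbb D$ is the conformal map onto the bounded quasidisk $\Omega:=F_\nu(\mathbb D)$, and the quasiconformal extension to $\mathbb D^*$ is governed by a quasiconformal reflection $j$ in the quasicircle $\partial\Omega$. The whole point is to choose $j$ real-analytic off $\partial\Omega$. I would obtain such a $j$ from the conformally natural (Douady--Earle) extension: transporting to the disk by $F_\nu$ and by the exterior conformal map $R\colon\mathbb D^*\to\Omega^*$ with $R(\infty)=\infty$, the reflection corresponds to the welding self-homeomorphism of $\mathbb S$, which is strongly quasisymmetric precisely because $\Psi_0\in A_B(\mathbb D)$; its barycentric extension is real-analytic on the disk and induces a dilatation in $M_B$. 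This yields a reflection $j$ that is real-analytic on $\Omega$ and on $\Omega^*$, fixes $\partial\Omega$, and carries the Carleson control, so that $\widetilde F_\nu(\zeta)=j(F_\nu(1/\bar\zeta))$ for $\zeta\in\mathbb D^*$ is a real-analytic diffeomorphism extending $F_\nu$, with dilatation (again denoted $\nu$) in $M_B(\mathbb D^*)$. Equivalently, one may mollify the dilatation of any reflection to a real-analytic one with controlled Carleson norm and re-solve, invoking that a Beltrami equation with real-analytic coefficient has real-analytic solutions by elliptic regularity.

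For $\Psi\in V_{\Psi_0}$ the map $h_\Psi=g_\Psi\circ(F_\nu|_{\mathbb D})^{-1}\colon\Omega\to\Omega_\Psi:=g_\Psi(\mathbb D)$ is conformal and, after shrinking $V_{\Psi_0}$, as close to the identity as we wish. I extend $h_\Psi$ to a quasiconformal self-map $\hat h_\Psi$ of $\widehat{\mathbb C}$ by the reflection-based (Ahlfors--Weill--Lehto) formula attached to the real-analytic reflection $j$; since this formula is algebraic in $h_\Psi,h_\Psi',h_\Psi''$ evaluated along $j$, the extension $\hat h_\Psi$ is real-analytic on $\Omega^*$ and depends holomorphically on $\Psi$ through the holomorphic dependence of $g_\Psi$. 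Setting $\widetilde F_{\sigma(\Psi)}=\hat h_\Psi\circ\widetilde F_\nu$ gives $F_{\sigma(\Psi)}|_{\mathbb D}=g_\Psi$ and a real-analytic diffeomorphism of $\mathbb D^*$, with $S\circ\sigma=\mathrm{id}$ on $V_{\Psi_0}$ by construction; its dilatation $\sigma(\Psi)$ is a holomorphic, near-identity perturbation of $\nu$ and hence lies in $M_B(\mathbb D^*)$, with holomorphy of $\sigma$ following as in Lemma~\ref{localbound}. For the asserted joint real-analyticity in $(\Psi,\zeta)\in V_{\Psi_0}\times\mathbb D^*$, rather than argue variable by variable I would exhibit $\widetilde F_{\sigma(\Psi)}(\zeta)$ as a composition of jointly real-analytic building blocks, namely the parameter-holomorphic ODE-solution map $(\Psi,z)\mapsto g_\Psi(z)$, the anti-holomorphic map $\zeta\mapsto1/\bar\zeta$, and the fixed real-analytic maps $F_\nu$ and $j$; together with the local boundedness furnished by Lemma~\ref{localbound} this yields joint real-analyticity.

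The main obstacle is the simultaneous control of two competing requirements on the extension to $\mathbb D^*$: interior real-analyticity on the one hand, and membership of the dilatation in $M_B(\mathbb D^*)$ on the other. A real-analytic extension can always be produced, by barycentric extension or by a real-analytic reflection together with elliptic regularity; but verifying that the resulting Beltrami coefficient induces a Carleson measure, and that this persists \emph{uniformly} as $\Psi$ varies over $V_{\Psi_0}$, is the delicate point, and is where the strongly-quasisymmetric/BMO compatibility of the chosen extension operator must be invoked. A secondary difficulty is making the notion of joint real-analyticity precise when one variable ranges over an open set of the Banach space $A_B(\mathbb D)$; this is handled by the holomorphic dependence of the solutions of the Schwarzian equation on the Banach parameter together with the composition argument above, rather than by any Hartogs-type theorem.
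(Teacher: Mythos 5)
Your proposal is correct in outline and follows essentially the same architecture as the paper's proof: a real-analytic quasiconformal reflection in $\partial\Omega$ compatible with the BMO structure, a generalized Ahlfors--Weill formula built from normalized solutions of the Schwarzian ODE that depend holomorphically on $\Psi$, and joint real-analyticity obtained by exhibiting $\widetilde F_{\sigma(\Psi)}(\zeta)$ as a composition of jointly regular building blocks. The one substantive divergence is the source of the reflection. The paper invokes \cite[Theorem 9]{M0} to choose a representative $\nu$ with $S(\nu)=\Psi_0$ whose extension $\widetilde F_\nu$ is a real-analytic bi-Lipschitz diffeomorphism for the hyperbolic metric, and then defines $r(w)=F_\nu\bigl((\widetilde F_\nu)^{-1}(w)^*\bigr)$; quasiconformality and real-analyticity of the resulting extension \eqref{generalAW} are then delegated to \cite[Lemma 4]{Su}. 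You instead manufacture the reflection from the Douady--Earle extension of the welding homeomorphism, using that the welding is strongly quasisymmetric when $\Psi_0\in\alpha(T_B)$. This is a viable alternative, but note that what the argument really needs from the reflection is not just real-analyticity and a dilatation in $M_B$: it must be bi-Lipschitz in the hyperbolic metric so that the factor $(w-r(w))^2(F_\nu)_*(\Psi)(r(w))$ appearing in the dilatation of the Ahlfors--Weill extension converts the Carleson condition on $\Psi$ into a Carleson condition on $\sigma(\Psi)$, uniformly over $V_{\Psi_0}$. You correctly flag this as the delicate point but leave it at the level of ``strongly-quasisymmetric/BMO compatibility of the extension operator''; that compatibility (together with the hyperbolic bi-Lipschitz property of the barycentric extension) is precisely what the paper's citation of \cite{M0} packages, so your route would need an explicit reference or proof of it to be complete. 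Your fallback suggestion of mollifying an arbitrary reflection and re-solving a Beltrami equation is weaker: elliptic regularity gives interior real-analyticity, but controlling the Carleson norm of the mollified dilatation and its stability in $\Psi$ is not automatic and is not addressed.
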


\begin{proof}
For $\Psi_0 \in \alpha(T_B)$, using the argument in \cite[Theorem 9]{M0}, we can show that there exists
$\nu \in M_B(\mathbb D^*)$ such that $S(\nu)=\Psi_0$, and the quasiconformal homeomorphism
$\widetilde F_\nu:\mathbb D^* \to \Omega^*$ with complex dilatation $\nu$
is a real-analytic bi-Lipschitz diffeomorphism with respect to the hyperbolic metrics on $\mathbb D^*$ and
its image domain $\Omega^* \subset \mathbb C$. 
Its conformal extension is $F_{\nu}:\mathbb D \to \Omega={\mathbb C} \setminus \overline{\Omega^*}$. 
Then, a quasiconformal reflection $r:\Omega^* \to \Omega$
with respect to $\partial \Omega=\partial \Omega^*$ is defined by 
\[
r(w)=F_\nu\left((\widetilde F_{\nu})^{-1}(w)^*\right) \quad (w \in \Omega^*),
\]
which is a real-analytic bi-Lipschitz diffeomorphism. Here, $\zeta^*=1/\bar \zeta \in \mathbb D$ for $\zeta \in \mathbb D^*$.

For any $\Psi \in V_{\Psi_0}$, we consider the pushforward $(F_\nu)_*(\Psi)$ by the conformal
homeomorphism $F_\nu:\mathbb D \to \Omega$, and solve the differential equation
$2\psi''(z)+(F_\nu)_*(\Psi)(z)\psi(z)=0$ on $\Omega$.
Let $\psi_1$ and $\psi_2$ be linearly independent solutions, normalized so that $\psi_1\psi_2'-\psi_2\psi_1'=1$.
Then, $S(\psi_1/\psi_2)=(F_\nu)_*(\Psi)$ on $\Omega$, 
and
the quasiconformal homeomorphism $\widetilde F_\mu$ of $\mathbb D^*$ with complex dilatation $\mu=\sigma(\Psi)$ is given by
the composition of $w=\widetilde F_\nu(\zeta)$ with
\begin{equation}\label{generalAW}
\frac{\psi_1(r(w))+(w-r(w))\psi_1'(r(w))}{\psi_2(r(w))+(w-r(w))\psi_2'(r(w))},
\end{equation}
which is a quasiconformal real-analytic diffeomorphism of $w \in \Omega^*$. This fact can be shown using \cite[Lemma 4]{Su}
along with the subsequent comment and remark. In particular, $\widetilde F_\mu=\widetilde F_{\sigma(\Psi)}$ is a real-analytic
diffeomorphism of $\mathbb D^*$.

Concerning the joint real-analyticity of $\widetilde F_{\sigma(\Psi)}(\zeta)$, 
besides relying on a general theory of separate real-analyticity
(see e.g. \cite[Theorem 7.1]{Si}),
we can also obtain it by examining \eqref{generalAW} to see that $\psi_i$ and $\psi'_i$ $(i=1,2)$ are jointly holomorphic
and $r$ is the fixed quasiconformal reflection which is real-analytic on $\zeta$.
%Concerning the joint real-analyticity of $\widetilde F_{\sigma(\Psi)}(\zeta)$, 
%we only have to examine \eqref{generalAW} to see that $w_i$ and $w'_i$ $(i=1,2)$ are jointly holomorphic
%and $r$ is the fixed quasiconformal reflection which is real-analytic on $\zeta$.
\end{proof}

The {\it Bers fiber space} $\widetilde T_B$ over $\alpha(T_B) \subset A_B(\mathbb D)$ is defined as
\[
\widetilde T_B=\{(\Psi,a) \in \alpha(T_B) \times \widehat{\mathbb C} \mid \Psi=S(\nu),
\ a \in \widetilde F_{\nu}(\mathbb D^*),\ \nu \in M_B(\mathbb D^*)\}.
\]
Via the Bers embedding, $\alpha(T_B)$ is identified with
the BMO Teich\-m\"ul\-ler space $T_B$. 
We note that the quasidisk $\widetilde F_{\nu}(\mathbb D^*)$ is determined by $\Psi$
independently of the choice of $\nu \in M_B(\mathbb D^*)$ with $S(\nu)=\Psi$.
We define a map $\lambda: \widetilde T_B \to \widetilde{\mathcal T}_B$ by
\[
\lambda(\Psi,a)=\widetilde L(\nu,a) =\log (W_a \circ F_{\nu})'
\] 
for $S(\nu)=\Psi$. This is well defined and independent of
the choice of $\nu$.

Note that the condition $a \in \widetilde F_{\nu}(\mathbb D^*)$ is equivalent to requiring that
$W_a \circ F_{\nu}$ maps $\mathbb D$ onto a bounded domain in $\mathbb C$, and 
that $a=\infty$ if and only if $W_a$ is an affine transformation of $\mathbb C$.
Hence, by Proposition \ref{affine},
$\lambda$ is bijective.
In fact, $\lambda$ is bijective on each fiber. That is, for each $\Psi \in \alpha(T_B)$ with $S(\nu)=\Psi$, 
$\lambda(\Psi,\cdot)$ maps $\widetilde F_{\nu}(\mathbb D^*)$ bijectively onto 
$J^{-1}(\Psi) \subset \widetilde{\mathcal T}_B$.
Here, $J^{-1}(\Psi)$ is a $1$-dimensional complex submanifold of $\widetilde{\mathcal T}_B$ since 
$J$ is a holomorphic split submersion by Proposition \ref{bundle}.

\begin{lemma}\label{biholo}
The map $\lambda:\widetilde T_B \to \widetilde{\mathcal T}_B$ is a biholomorphic homeomorphism.
\end{lemma}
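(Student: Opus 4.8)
The plan is to give $\widetilde T_B$ the complex structure it inherits as an open subset of the product manifold $\alpha(T_B)\times\widehat{\mathbb C}$, and then to check that $\lambda$ and its set-theoretic inverse (which exists since $\lambda$ is already known to be bijective) are both holomorphic. Since a holomorphic map between complex Banach manifolds is continuous, a holomorphic bijection with holomorphic inverse is automatically a homeomorphism, so this yields the full assertion. The three tasks are therefore: openness of $\widetilde T_B$, holomorphy of $\lambda$, and holomorphy of $\lambda^{-1}$.

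First I would show that $\widetilde T_B$ is open in $\alpha(T_B)\times\widehat{\mathbb C}$. Fix $(\Psi_0,a_0)\in\widetilde T_B$ and take the local holomorphic right inverse $\sigma:V_{\Psi_0}\to M_B(\mathbb D^*)$ of $S$ furnished by Proposition~\ref{real-analytic}, so that $(\Psi,\zeta)\mapsto\widetilde F_{\sigma(\Psi)}(\zeta)$ is jointly real-analytic on $V_{\Psi_0}\times\mathbb D^*$ and, together with the continuous dependence of quasiconformal maps on their Beltrami coefficients and the continuity of $\sigma$, depends continuously on $\Psi$ uniformly on the compact set $\overline{\mathbb D}$. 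Consequently the bounded domains $\Omega_\Psi=F_{\sigma(\Psi)}(\mathbb D)$, and hence the quasidisks $\Omega^*_\Psi=\widehat{\mathbb C}\setminus\overline{\Omega_\Psi}=\widetilde F_{\sigma(\Psi)}(\mathbb D^*)$, vary continuously in the spherical Hausdorff metric. Because $a_0$ lies in the interior $\Omega^*_{\Psi_0}$, it has positive spherical distance from the compact boundary $\partial\Omega^*_{\Psi_0}$, so for $\Psi$ near $\Psi_0$ and $a$ near $a_0$ the point $a$ still lies in $\Omega^*_\Psi$, i.e.\ $(\Psi,a)\in\widetilde T_B$. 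Since the fiber is the finite-dimensional $\widehat{\mathbb C}$, this distance estimate suffices and no infinite-dimensional invariance-of-domain argument is needed. Thus $\widetilde T_B$ is open and carries a complex Banach manifold structure.

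For holomorphy of $\lambda$, note that on the neighborhood above the condition $a\in\Omega^*_\Psi$ says exactly that $(\sigma(\Psi),a)\in\widetilde M_B(\mathbb D^*)$, so by the well-definedness of $\lambda$ we may write $\lambda(\Psi,a)=\widetilde L(\sigma(\Psi),a)$. This exhibits $\lambda$ locally as the composition of the holomorphic map $(\Psi,a)\mapsto(\sigma(\Psi),a)$ with $\widetilde L$, which is holomorphic by Lemma~\ref{localbound}; hence $\lambda$ is holomorphic. For holomorphy of $\lambda^{-1}$ I would invert the identity~\eqref{1st2nd}. Given $\Phi=\lambda(\Psi,a)$, the base coordinate is recovered as the holomorphic map $\Psi=J(\Phi)$. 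Writing $\Phi_0=L(\sigma(\Psi))$ and using $F_{\sigma(\Psi)}(0)=0$, $F_{\sigma(\Psi)}'(0)=1$, evaluation of~\eqref{1st2nd} at $z=0$ gives $\Phi'(0)-\Phi_0'(0)=2/a$, so that
\[
a=\frac{2}{\Phi'(0)-\Phi_0'(0)}.
\]
Since point evaluation $\Phi\mapsto\Phi'(0)$ is a bounded linear functional on ${\rm BMOA}(\mathbb D)$ (as $|\Phi'(0)|\le\Vert\Phi\Vert_B\lesssim\Vert\Phi\Vert_{\rm BMOA}$) and $\Phi\mapsto\Phi_0=L(\sigma(J(\Phi)))$ is holomorphic, the fiber coordinate $a$ depends holomorphically on $\Phi$ wherever $a\neq\infty$; near $a=\infty$, equivalently $\Phi$ near $\Phi_0$, the reciprocal $1/a=(\Phi'(0)-\Phi_0'(0))/2$ is holomorphic and vanishes, which provides the chart at $\infty$. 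These local formulas patch because $\lambda^{-1}$ is the globally well-defined inverse of a bijection, so $\lambda^{-1}(\Phi)=(J(\Phi),a(\Phi))$ is holomorphic.

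The main obstacle I anticipate is the first step: showing that $\widetilde T_B$ is genuinely an open complex submanifold of the product, which amounts to controlling the variation of the quasidisk fibers $\Omega^*_\Psi$. This is precisely where Proposition~\ref{real-analytic} is indispensable — it supplies a local section $\sigma$ along which the conformal maps $F_{\sigma(\Psi)}$, and thus the fibers, vary continuously — and it is the point at which one must exploit the finite-dimensionality of the fiber to avoid appealing to invariance of domain over the infinite-dimensional base. By contrast, once openness is in hand, the holomorphy of $\lambda$ reduces to Lemma~\ref{localbound} and the holomorphy of $\lambda^{-1}$ to the elementary inversion of~\eqref{1st2nd}.
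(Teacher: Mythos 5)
Your argument is correct, and the forward half (holomorphy of $\lambda$ via the local representation $\lambda_\sigma(\Psi,a)=\widetilde L(\sigma(\Psi),a)$ and Lemma~\ref{localbound}) is exactly what the paper does. Where you genuinely diverge is in the holomorphy of $\lambda^{-1}$: the paper argues fiber-wise, observing that $\lambda_\sigma(\Psi,\cdot)$ is a holomorphic bijection from the one-dimensional domain $\widetilde F_{\sigma(\Psi)}(\mathbb D^*)$ onto the one-dimensional complex submanifold $J^{-1}(\Psi)$, hence biholomorphic, and then asserts that this fiber-wise property yields holomorphy of $\lambda^{-1}$ --- a step that tacitly requires passing from separate (fiber-plus-base) holomorphy to joint holomorphy. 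You instead invert explicitly: the base coordinate is $J(\Phi)$, and evaluating \eqref{1st2nd} at $z=0$ with the normalization $F_\nu(0)=0$, $F_\nu'(0)=1$ gives $a=2/(\Phi'(0)-\Phi_0'(0))$, with $1/a$ serving as the chart near $a=\infty$ (and $a\neq 0$ automatically since $0\in F_\nu(\mathbb D)$). Since $\Phi\mapsto\Phi'(0)$ is a bounded functional on ${\rm BMOA}(\mathbb D)$ and $\Phi_0=L(\sigma(J(\Phi)))$ is holomorphic, this gives joint holomorphy of $\lambda^{-1}$ directly and is arguably cleaner than the paper's fiber-wise route. You also supply an openness argument for $\widetilde T_B$ in $\alpha(T_B)\times\widehat{\mathbb C}$, which the paper leaves implicit; note that for this step you do not really need the real-analytic section of Proposition~\ref{real-analytic} --- any local holomorphic right inverse from Proposition~\ref{weaksubmersion} together with the locally uniform continuous dependence of $\widetilde F_\mu$ on $\mu$ suffices. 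The one hypothesis both arguments share, and which you use when writing $\Phi_0=L(\sigma(J(\Phi)))$ and patching charts, is that $F_\nu|_{\mathbb D}$ (hence the quasidisk $\widetilde F_\nu(\mathbb D^*)$) depends only on $\Psi=S(\nu)$; this is exactly the well-definedness of $\lambda$ that the paper records before the lemma, so no gap arises.
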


\begin{proof}
Choose any $\Psi_0 \in \alpha(T_B)$, and take $V_{\Psi_0}$ and $\sigma$ as in the proof of Proposition \ref{bundle}.
The restriction of $\lambda$ to the domain
\[
\widetilde V_{\Psi_0}=\{(\Psi,a) \in V_{\Psi_0} \times \widehat{\mathbb C} \mid 
a \in \widetilde F_{\sigma(\Psi)}(\mathbb D^*)\} \subset \widetilde T_B
\]
is explicitly represented as 
$\lambda_\sigma(\Psi,a)=\widetilde L(\sigma(\Psi),a)$. Then,
we see that $\lambda_\sigma$ is holomorphic on $\widetilde V_{\Psi_0}$ by Lemma \ref{localbound}, and thus $\lambda$ is a holomorphic bijection. 

Moreover, for each fixed $\Psi \in V_{\Psi_0}$, the domain $\widetilde F_{\sigma(\Psi)}(\mathbb D^*)$ of complex dimension $1$
is mapped by $\lambda_\sigma(\Psi,\cdot)$
holomorphically and bijectively onto the complex submanifold $J^{-1}(\Psi) \subset \widetilde{\mathcal T}_B$.
Hence, $\lambda_\sigma(\Psi,\cdot)$ is a biholomorphic homeomorphism.
It follows from this fiber-wise property 
that $\lambda^{-1}$ is holomorphic, and thus $\lambda$ is biholomorphic.
\end{proof}

The structure of the space $\widetilde T_B \cong \widetilde{\mathcal T}_B$ over 
$T_B \cong \alpha(T_B)$ can be described precisely as follows:

\begin{theorem}\label{bundle2}
$\widetilde{\mathcal T}_B$ is a real-analytic disk bundle over $\alpha(T_B)$ with projection $J$.
\end{theorem}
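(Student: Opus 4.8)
The plan is to transport the problem to the Bers fiber space $\widetilde T_B$ by means of the biholomorphism $\lambda$ of Lemma \ref{biholo}, and then to manufacture explicit local trivializations there out of the quasiconformal homeomorphisms provided by Proposition \ref{real-analytic}. First I would record that $\lambda$ intertwines the two projections: since the Schwarzian derivative is invariant under post-composition with M\"obius transformations, $J(\lambda(\Psi,a))=S_{W_a \circ F_\nu}=S_{F_\nu}=\Psi$, so that $J \circ \lambda=\mathrm{pr}_1$, where $\mathrm{pr}_1:\widetilde T_B \to \alpha(T_B)$ denotes the restriction of the projection onto the first factor. Because $\lambda$ is a biholomorphism, hence in particular a real-analytic diffeomorphism, commuting with the projections, it suffices to prove that $\mathrm{pr}_1:\widetilde T_B \to \alpha(T_B)$ is a real-analytic disk bundle; the bundle structure then transfers to $J:\widetilde{\mathcal T}_B \to \alpha(T_B)$ through $\lambda$.

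Next I would build the local trivializations. Fix $\Psi_0 \in \alpha(T_B)$ and take the local holomorphic right inverse $\sigma:V_{\Psi_0} \to M_B(\mathbb D^*)$ of $S$ from Proposition \ref{real-analytic}, so that $\widetilde F_{\sigma(\Psi)}$ is a real-analytic diffeomorphism of $\mathbb D^*$ for each $\Psi$ and $\widetilde F_{\sigma(\Psi)}(\zeta)$ is jointly real-analytic in $(\Psi,\zeta) \in V_{\Psi_0} \times \mathbb D^*$. Over $V_{\Psi_0}$ the total space is $\widetilde V_{\Psi_0}=\{(\Psi,a) \mid \Psi \in V_{\Psi_0},\ a \in \widetilde F_{\sigma(\Psi)}(\mathbb D^*)\}$, and I would define the candidate trivialization in the parametrizing direction,
\begin{equation*}
\Theta_\sigma:V_{\Psi_0} \times \mathbb D^* \to \widetilde V_{\Psi_0},\qquad
\Theta_\sigma(\Psi,\zeta)=(\Psi,\widetilde F_{\sigma(\Psi)}(\zeta)).
\end{equation*}
By Proposition \ref{real-analytic} this map is jointly real-analytic, and for each fixed $\Psi$ it restricts to the bijection $\widetilde F_{\sigma(\Psi)}:\mathbb D^* \to \widetilde F_{\sigma(\Psi)}(\mathbb D^*)$ onto the fiber; hence $\Theta_\sigma$ is a real-analytic bijection that is the identity on the base factor. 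Since $\mathbb D^*$ is biholomorphic to a disk via $\zeta \mapsto 1/\zeta$, the model fiber is a disk, as required.

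The crucial step, and the one I expect to be the main obstacle, is to show that $\Theta_\sigma$ is a real-analytic \emph{diffeomorphism}, i.e. that its inverse $h_\sigma(\Psi,a)=(\Psi,(\widetilde F_{\sigma(\Psi)})^{-1}(a))$ is again real-analytic. The point is to solve $\widetilde F_{\sigma(\Psi)}(\zeta)=a$ for $\zeta$ as a function of $(\Psi,a)$: the equation is jointly real-analytic, and its differential in the \emph{finite-dimensional} variable $\zeta$ is the Jacobian of the diffeomorphism $\widetilde F_{\sigma(\Psi)}$, which is invertible everywhere. The real-analytic implicit function theorem, with $\zeta$ as the finite-dimensional implicit variable and $\Psi \in V_{\Psi_0} \subset A_B(\mathbb D)$ as a parameter on the infinite-dimensional Banach manifold, then yields $\zeta=(\widetilde F_{\sigma(\Psi)})^{-1}(a)$ as a jointly real-analytic function; together with the global fiber-wise bijectivity of $\Theta_\sigma$ this gives the globally defined real-analytic inverse $h_\sigma$. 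The care needed here is twofold: one must work in the real-analytic category over an infinite-dimensional Banach manifold, and one must cover the point $\infty \in \mathbb D^*$ by first passing to a holomorphic chart on $\widehat{\mathbb C}$ before invoking the theorem.

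Finally I would check the cocycle condition. Given two charts arising from right inverses $\sigma_1$ over $V_1$ and $\sigma_2$ over $V_2$, the transition map over $V_1 \cap V_2$ is
\begin{equation*}
h_{\sigma_2} \circ \Theta_{\sigma_1}(\Psi,\zeta)=\bigl(\Psi,\,(\widetilde F_{\sigma_2(\Psi)})^{-1}\circ \widetilde F_{\sigma_1(\Psi)}(\zeta)\bigr),
\end{equation*}
which is well defined because $\widetilde F_{\sigma_1(\Psi)}$ and $\widetilde F_{\sigma_2(\Psi)}$ map $\mathbb D^*$ onto the same quasidisk determined by $\Psi$. It is a composition of jointly real-analytic maps and, for each fixed $\Psi$, a real-analytic self-diffeomorphism of $\mathbb D^*$. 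Thus the transition functions are real-analytic and take values in the diffeomorphism group of the fiber, exhibiting $\mathrm{pr}_1:\widetilde T_B \to \alpha(T_B)$, and hence $J:\widetilde{\mathcal T}_B \to \alpha(T_B)$, as a real-analytic disk bundle.
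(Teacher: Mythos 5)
Your proposal is correct and takes essentially the same approach as the paper: the paper's local trivialization $\ell_\sigma(\Psi,\zeta)=\lambda_\sigma(\Psi,\widetilde F_{\sigma(\Psi)}(\zeta))$ is exactly your composition $\lambda\circ\Theta_\sigma$, built from the biholomorphism of Lemma \ref{biholo} and the special right inverse of Proposition \ref{real-analytic}. Your additional care with the implicit function theorem for the inverse of $\Theta_\sigma$ and with the cocycle condition merely fills in details the paper leaves implicit (it asserts real-analytic diffeomorphy from the non-degenerate derivative of the jointly real-analytic map $\widetilde F_{\sigma(\Psi)}(\zeta)$).
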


\begin{proof}
By Lemma \ref{biholo}, the map
$\lambda_\sigma(\Psi,a)=\widetilde L(\sigma(\Psi),a)=\log (W_a \circ F_{\sigma(\Psi)})'$ is a biholomorphic homeomorphism
on some neighborhood $\widetilde V_{\Psi_0}$ for any $\Psi_0 \in \alpha(T_B)$.
Using this, we define a disk bundle structure over $\alpha(T_B)$ on $\widetilde{\mathcal T}_B$.
For every $\Psi_0 \in \alpha(T_B)$, we define the bijective map
\[
\ell_\sigma:V_{\Psi_0} \times \mathbb D^* \to J^{-1}(V_{\Psi_0}) \subset \widetilde{\mathcal T}_B
\]
by
$\ell_\sigma(\Psi, \zeta)=\lambda_\sigma(\Psi,\widetilde F_{\sigma(\Psi)}(\zeta))$, which satisfies  
$J \circ \ell_\sigma(\Psi,\zeta)=\Psi$.

Moreover, we choose the special local right inverse $\sigma$ as in Proposition \ref{real-analytic}. Then
$\ell_\sigma$ is a real-analytic diffeomorphism, since $\lambda_\sigma$ is biholomorphic and
$\widetilde F_{\sigma(\Psi)}(\zeta)$ is a jointly real-analytic map into $\widehat{\mathbb C}$
with non-degenerate derivative. 
Hence $\ell_\sigma$ yields a local trivialization of the projection 
$J:\widetilde{\mathcal T}_B \to \alpha(T_B)$, showing that $\widetilde{\mathcal T}_B$ indeed carries the structure
of a real-analytic disk bundle, as claimed.
%Moreover, we choose the special local right inverse $\sigma$ as in Proposition \ref{real-analytic}. Then,
%$\ell_\sigma$ is a real-analytic diffeomorphism since $\lambda_\sigma$ is biholomorphic and
%$\widetilde F_{\sigma(\Psi)}(\zeta)$ is a jointly real-analytic map to $\widehat{\mathbb C}$
%with non-degenerate derivative. 
%Hence, $\ell_\sigma$ provides a local trivialization for the projection 
%$J:\widetilde{\mathcal T}_B \to \alpha(T_B)$, showing that $\widetilde{\mathcal T}_B$ has the structure
%of a real-analytic disk bundle as claimed.
\end{proof}

%We note that since the fiber is the disk $\mathbb D^*$,
%the bundle $\widetilde{\mathcal T}_B \to \alpha(T_B)$ is $C^\infty$-trivial, 
%that is, $\widetilde{\mathcal T}_B$ has a smooth product structure
%$\alpha(T_B) \times \mathbb D^*$.

By restricting the bundle projection $J$ to $\widetilde{\mathcal T}_V \subset {\rm VMOA}(\mathbb D)$,
we can show an analogous result for this sub-bundle with only minor modifications to the proof. We state it as a corollary:

\begin{corollary}\label{bundleVMO}
$\widetilde{\mathcal T}_V$ is a real-analytic disk sub-bundle over $\alpha(T_V)$ with projection $J$.
\end{corollary}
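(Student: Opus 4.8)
The plan is to establish Corollary~\ref{bundleVMO} by showing that every construction used in the proof of Theorem~\ref{bundle2} restricts compatibly to the VMO setting, so that the local trivializations $\ell_\sigma$ descend to real-analytic trivializations of $J$ over $\alpha(T_V)$. First I would record the key inclusions already available in the text: $\widetilde{\mathcal T}_V=\widetilde{\mathcal T}\cap {\rm VMOA}(\mathbb D)$, that $\alpha(T_V)=S(M_V(\mathbb D^*))$ is a submanifold of $\alpha(T_B)$, and that $J(\widetilde{\mathcal T}_V)=\alpha(T_V)$. The goal is to see that $J^{-1}(\Psi)\cap \widetilde{\mathcal T}_V$ is again a one-dimensional disk for each $\Psi\in\alpha(T_V)$, and that the trivialization respects this.

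Next I would verify that the two main ingredients of the previous proof survive the restriction. For the local right inverse, Proposition~\ref{weaksubmersion} already asserts the analogous local holomorphic right inverse of $S$ into $M_V(\mathbb D^*)$, and the real-analytic enhancement of Proposition~\ref{real-analytic} applies verbatim once we observe that the base point $\nu$ may be taken in $M_V(\mathbb D^*)$ whenever $\Psi_0\in\alpha(T_V)$; the construction via the Schwarzian differential equation and the quasiconformal reflection does not leave the VMO class. For the fiber map, I would invoke the VMO analogue of Lemma~\ref{localbound}: the estimate there bounds $\Vert\widetilde L(\nu,a)-\log(F_\nu)'\Vert_{\rm BMOA}$ by an integral of $|(F_\nu)'|^2$ over $\Delta(x,r)\cap\mathbb D$ divided by nothing worse than a constant, and the same computation shows the increment lies in ${\rm VMOA}(\mathbb D)$ because the added term $-2(F_\nu)'/(F_\nu-a)$ contributes a measure that vanishes in the Carleson sense as $r\to 0$. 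Thus $\widetilde L$ maps $\{(\nu,a)\mid \nu\in M_V(\mathbb D^*)\}$ into $\widetilde{\mathcal T}_V$, and $\lambda$ of Lemma~\ref{biholo} restricts to a biholomorphic homeomorphism onto $\widetilde{\mathcal T}_V$.

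Having these, I would assemble the bundle structure exactly as in Theorem~\ref{bundle2}: for $\Psi_0\in\alpha(T_V)$ choose the real-analytic $\sigma$ landing in $M_V(\mathbb D^*)$, form $\ell_\sigma(\Psi,\zeta)=\lambda_\sigma(\Psi,\widetilde F_{\sigma(\Psi)}(\zeta))$ restricted to $(V_{\Psi_0}\cap\alpha(T_V))\times\mathbb D^*$, and note that it is a real-analytic diffeomorphism onto $J^{-1}(V_{\Psi_0}\cap\alpha(T_V))\cap\widetilde{\mathcal T}_V$ satisfying $J\circ\ell_\sigma(\Psi,\zeta)=\Psi$. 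Since $\widetilde{\mathcal T}_V$ is a closed submanifold of $\widetilde{\mathcal T}_B$ and $\alpha(T_V)$ a closed submanifold of $\alpha(T_B)$, these local trivializations patch over a cover of $\alpha(T_V)$ and exhibit $\widetilde{\mathcal T}_V$ as a real-analytic disk sub-bundle.

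The main obstacle I would anticipate is the closure claim in the second step, namely that the fiber increment $-2(F_\nu)'/(F_\nu-a)$ keeps $\widetilde L(\nu,a)$ inside the \emph{vanishing} class ${\rm VMOA}(\mathbb D)$ rather than merely ${\rm BMOA}(\mathbb D)$. The Bloch-norm bound from Lemma~\ref{localbound} only controls the full Carleson norm, not the decay at scale $r\to 0$, so I would need to argue that adding a function whose derivative is $-2(F_\nu)'/(F_\nu-a)$ with $a$ bounded away from $\partial F_\nu(\mathbb D)$ produces a vanishing Carleson measure; this should follow because $(1-|z|^2)^{1/2}(F_\nu)'(z)/(F_\nu(z)-a)$ is bounded and the associated measure inherits the vanishing property from $m_{\log(F_\nu)'}\in{\rm CM}_0(\mathbb D)$ together with the uniform boundedness of the multiplier, but making this ``vanishing'' transfer precise is exactly the point where the VMO case differs from the BMO case and where I would concentrate the detailed estimate.
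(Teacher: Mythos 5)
Your overall strategy coincides with the paper's: the paper gives no separate argument for Corollary~\ref{bundleVMO} beyond asserting that the proof of Theorem~\ref{bundle2} restricts ``with only minor modifications,'' and your proposal is exactly the verification that each ingredient restricts. You also correctly isolate the one substantive point, namely that the fibers over $\alpha(T_V)$ stay inside ${\rm VMOA}(\mathbb D)$.

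However, the specific justification you sketch for that point does not work as stated, and it is also more work than necessary. The multiplier $(1-|z|^2)^{1/2}(F_\nu)'(z)/(F_\nu(z)-a)$ need not be bounded: for a conformal map onto a bounded domain one only has $(1-|z|^2)|(F_\nu)'(z)|$ bounded (it is comparable to $d(F_\nu(z),\partial F_\nu(\mathbb D))$), so $(1-|z|^2)^{1/2}|(F_\nu)'(z)|$ can grow like $(1-|z|)^{-1/2}$. Moreover, the vanishing property of the increment has nothing to inherit from $m_{\log(F_\nu)'}\in{\rm CM}_0(\mathbb D)$, since the relevant density is $(1-|z|^2)|(F_\nu)'|^2/|F_\nu-a|^2$, not a pointwise multiple of $(1-|z|^2)|(F_\nu)''/(F_\nu)'|^2$. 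If you want the direct estimate, the correct mechanism is already in Lemma~\ref{localbound}: $\frac{1}{r}\int_{\Delta(x,r)\cap\mathbb D}(1-|z|^2)|(F_\nu)'|^2\,dxdy\lesssim \int_{\Delta(x,r)\cap\mathbb D}|(F_\nu)'|^2\,dxdy$, which equals the Euclidean area of $F_\nu(\Delta(x,r)\cap\mathbb D)$ and tends to $0$ uniformly in $x$ as $r\to 0$ by the uniform absolute continuity of the finite measure $|(F_\nu)'|^2\,dxdy$. Simpler still, the paper has already recorded the needed fact after Proposition~\ref{affine}: for $\nu\in M_V(\mathbb D^*)$ the map $W_a\circ F_\nu$ equals $F_{\mu'}$ up to the correction $G$, and $\mu'=\nu$ on an annulus $1<|\zeta|<R$, so $\mu'\in M_V(\mathbb D^*)$ and hence $\log(W_a\circ F_\nu)'\in L(M_V(\mathbb D^*))=\widetilde{\mathcal T}_V$. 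This shows at once that $J^{-1}(\Psi)\subset\widetilde{\mathcal T}_V$ for every $\Psi\in\alpha(T_V)$, after which the restriction of $\lambda_\sigma$ and $\ell_\sigma$ proceeds exactly as you describe.
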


In this case, by \cite[Theorem 6.4]{WM-2},
a global section of the bundle projection $J$ can be obtained via the global real-analytic right inverse $\Sigma$ of
the Schwarzian derivative map $S:M_V(\mathbb D^*) \to \alpha(T_V)$.
Moreover, for each fixed $\Psi \in \alpha(T_V)$, $\widetilde F_{\Sigma(\Psi)}$ is a real-analytic
diffeomorphism of $\mathbb D^*$ (see also \cite[Theorem 4]{M0}), and in fact,  
$\widetilde F_{\Sigma(\Psi)}(\zeta)$ is jointly real-analytic for $(\Psi,\zeta) \in \alpha(T_V) \times \mathbb D^*$.
Replacing the local right inverse $\sigma$ in the proof of Theorem \ref{bundle2} 
with this global $\Sigma$,
we define a real-analytic diffeomorphism
\[
\ell_\Sigma:\alpha(T_V) \times \mathbb D^* \to \widetilde{\mathcal T}_V
\]
by
$\ell_\Sigma(\Psi, \zeta)
=\widetilde L(\Sigma(\Psi),\widetilde F_{\Sigma(\Psi)}(\zeta))$. 
Then, in
the real-analytic category, the total space $\widetilde{\mathcal T}_V$ has a product structure,
and the bundle becomes trivial. 
%We note that $T_V$ is real-analytically contractible by \cite[Corollary 1.3]{WM-2},
%from which the triviality of the bundle follows. We claim here that the map $\ell_\Sigma$ gives an explicit
%real-analytic equivalence.

\begin{theorem}\label{product}
Each $\zeta \in \mathbb D^*$ defines a global real-analytic section 
\[
\ell_\Sigma(\cdot, \zeta):\alpha(T_V) \to \widetilde{\mathcal T}_V
\]
for the bundle projection $J:\widetilde{\mathcal T}_V \to \alpha(T_V)$.
Moreover, the total space $\widetilde{\mathcal T}_V$ is real-analytically equivalent to 
$\alpha(T_V) \times \mathbb D^*$
under $\ell_\Sigma$, satisfying $J \circ \ell_\Sigma(\Psi,\zeta)=\Psi$.
\end{theorem}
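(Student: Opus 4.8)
The plan is to promote the local trivializations used in Theorem~\ref{bundle2} to a single global one, exploiting the fact that in the VMO setting the right inverse $\Sigma$ of $S:M_V(\mathbb D^*)\to\alpha(T_V)$ is globally defined and real-analytic (by \cite[Theorem 6.4]{WM-2}), whereas in the BMO case only local right inverses $\sigma$ were available. First I would check that $\ell_\Sigma$ is well defined on all of $\alpha(T_V)\times\mathbb D^*$: since $S(\Sigma(\Psi))=\Psi$ for every $\Psi\in\alpha(T_V)$, the point $\widetilde F_{\Sigma(\Psi)}(\zeta)$ lies in the quasidisk $\widetilde F_{\Sigma(\Psi)}(\mathbb D^*)$, so $(\Sigma(\Psi),\widetilde F_{\Sigma(\Psi)}(\zeta))\in\widetilde M_B(\mathbb D^*)$ and $\ell_\Sigma(\Psi,\zeta)=\widetilde L(\Sigma(\Psi),\widetilde F_{\Sigma(\Psi)}(\zeta))\in\widetilde{\mathcal T}_V$. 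The identity $J\circ\ell_\Sigma(\Psi,\zeta)=\Psi$ follows from the M\"obius invariance of the Schwarzian derivative, since $J\circ\widetilde L(\nu,a)=S_{W_a\circ F_\nu}=S_{F_\nu}=S(\nu)$; taking $\nu=\Sigma(\Psi)$ gives the section property for each fixed $\zeta$.

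Next I would establish that $\ell_\Sigma$ is a real-analytic diffeomorphism, which is where the substance lies. Real-analyticity is inherited exactly as in the proof of Theorem~\ref{bundle2}: writing $\ell_\Sigma(\Psi,\zeta)=\lambda_\Sigma(\Psi,\widetilde F_{\Sigma(\Psi)}(\zeta))$ with $\lambda_\Sigma(\Psi,a)=\widetilde L(\Sigma(\Psi),a)$ the analog of $\lambda_\sigma$, the factor $\lambda_\Sigma$ is real-analytic because $\widetilde L$ is holomorphic (Lemma~\ref{localbound}) and $\Sigma$ is real-analytic, while $\widetilde F_{\Sigma(\Psi)}(\zeta)$ is jointly real-analytic with non-degenerate $\zeta$-derivative. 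For bijectivity I would argue fiber-wise as in Lemma~\ref{biholo}: for fixed $\Psi$, the map $\zeta\mapsto\widetilde F_{\Sigma(\Psi)}(\zeta)$ is a bijection of $\mathbb D^*$ onto $\widetilde F_{\Sigma(\Psi)}(\mathbb D^*)$, and $a\mapsto\widetilde L(\Sigma(\Psi),a)$ maps this quasidisk bijectively onto the fiber $J^{-1}(\Psi)$ by Proposition~\ref{affine}; since $\ell_\Sigma$ covers the identity on $\alpha(T_V)$, assembling the fiber bijections shows $\ell_\Sigma$ is a global bijection.

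For the inverse I would compare $\ell_\Sigma$ locally with the real-analytic local trivializations $\ell_\sigma$ from Theorem~\ref{bundle2}, available here by Corollary~\ref{bundleVMO}: both cover the identity on the base, so the transition $\ell_\sigma^{-1}\circ\ell_\Sigma$ has the form $(\Psi,\zeta)\mapsto(\Psi,g(\Psi,\zeta))$ for a fiber-preserving real-analytic $g$ whose restriction to each disk fiber is a biholomorphism of disks, hence a real-analytic diffeomorphism; consequently $\ell_\Sigma$ is locally, and therefore globally, a real-analytic diffeomorphism. The main obstacle is precisely this upgrade from real-analytic bijection to diffeomorphism: one must ensure that the full Fr\'echet derivative $d\ell_\Sigma$, not merely its restriction to the one-dimensional fibers, is a topological isomorphism at every point. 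Because $\ell_\Sigma$ is fibered over the identity, in the trivialized picture $d(\ell_\sigma^{-1}\circ\ell_\Sigma)$ is block-triangular with the identity on the base block and the non-degenerate fiber derivative $\partial_\zeta g$ on the fiber block, so it is an isomorphism and the real-analytic inverse function theorem applies; the comparison with $\ell_\sigma$ makes this transparent by reducing the transition to finite-dimensional disk automorphisms already understood. Once $\ell_\Sigma$ is known to be a real-analytic diffeomorphism with $J\circ\ell_\Sigma(\Psi,\zeta)=\Psi$, both assertions follow at once: each $\ell_\Sigma(\cdot,\zeta)$ is a global real-analytic section, and $\ell_\Sigma$ realizes the product trivialization $\widetilde{\mathcal T}_V\cong\alpha(T_V)\times\mathbb D^*$.
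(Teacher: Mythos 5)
Your proposal is correct and follows essentially the same route as the paper: replace the local right inverses $\sigma$ by the global real-analytic right inverse $\Sigma$ of $S:M_V(\mathbb D^*)\to\alpha(T_V)$ from \cite[Theorem 6.4]{WM-2}, use the joint real-analyticity of $\widetilde F_{\Sigma(\Psi)}(\zeta)$ with non-degenerate fiber derivative, and compose with the holomorphic $\widetilde L$ (equivalently the biholomorphic $\lambda$) to obtain the global trivialization $\ell_\Sigma$. One minor slip: the fiber maps of the transition $\ell_\sigma^{-1}\circ\ell_\Sigma$ are $\widetilde F_{\sigma(\Psi)}^{-1}\circ\widetilde F_{\Sigma(\Psi)}$, which are quasiconformal real-analytic diffeomorphisms of $\mathbb D^*$ but not biholomorphisms in general; this does not affect your argument, since real-analyticity with invertible derivative is all that is needed.
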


We note that $T_B \cong \alpha(T_B)$ is also real-analytically contractible because 
$T_B$ is real-analytically equivalent to a certain convex domain of the Banach space of real-valued BMO functions by
\cite[Corollary 1.2]{WM-1}. However, this fact does not help us to find whether or not
the trivial disk bundle $\widetilde{\mathcal T}_B$ over $\alpha(T_B)$ is real-analytically trivial.
%We only have that $\widetilde{\mathcal T}_B$ is topologically trivial.
%Thus, for the same reason, the triviality of the bundle over $T_B$ follows.
%We claim here that the map $\ell_\Sigma$ gives an explicit
%real-analytic equivalence.

%\begin{theorem}\label{product2}
%The real-analytic disk bundle $J:\widetilde{\mathcal T}_B \to \alpha(T_B)$ is trivial. In particular, there exists
%a global real-analytic section $\eta:\alpha(T_B) \to \widetilde{\mathcal T}_B$ such that 
%$J \circ \eta={\rm id}_{\alpha(T_B)}$.
%\end{theorem}

%\begin{remark}
%All the results in this section are also known to hold for the universal Teich\-m\"ul\-ler space $T$
%with $M(\mathbb D^*)$ and $B(\mathbb D)$ (see \cite[Appendix]{Teo}), or can be verified more easily in that setting.
%Analogous arguments and results can be applied also to other Teich\-m\"ul\-ler spaces such as those for
%circle diffeomorphisms of H\"older or Zygmund continuous derivatives and integrable Beltrami coefficients
%giving the Weil--Petersson metric.
%\end{remark}

\begin{remark}
All the results in this section are also known to hold for the universal Teich\-m\"ul\-ler space $T$,
with $M(\mathbb D^*)$ and $B(\mathbb D)$ (see \cite[Appendix]{Teo}), and can in fact be verified more easily in that setting.
Analogous arguments and results also apply to other Teich\-m\"ul\-ler spaces, such as those arising from
circle diffeomorphisms with H\"older or Zygmund continuous derivatives, and from integrable Beltrami coefficients
giving the Weil--Petersson metric.
\end{remark}

\end{document}